\algrenewcommand\algorithmicindent{2.0em}
\setlist{parsep=0pt,listparindent=\parindent}
\newtheorem{definition}{Definition}
\newtheorem{theorem}{Theorem}
\newtheorem{lemma}{Lemma}
\newtheorem{example}{Example}
\newtheorem{notation}{\textbf{\underline{Notation}}}
\newcommand{\A}{\mathbb{A}}
\newcommand{\vp}{\varphi}
\newcommand{\AR}{Auslander-Reiten } 
\newcommand{\mat}[1]{\left[#1\right]}
\DeclareMathOperator{\End}{End}
\DeclareMathOperator{\Hom}{Hom}
\DeclareMathOperator{\rep}{rep}
\newcommand{\arr}[1]{\operatorname{arr}(#1)}
\newcommand{\n}{\emptyset}
\renewcommand{\*}{*}
\newcommand{\0}{0}
\newcommand{\E}{E}
\newcommand{\SNF}{ \begin{smallmatrix} E & 0 \\ 0 & 0 \end{smallmatrix} }
\newcommand{\smat}[1]{ \left[\begin{smallmatrix} #1 \end{smallmatrix}\right] }
\newcommand{\Id}{\mathrm{id}}
\newcommand{\KQ}{K\!Q}
\newcommand{\CL}{C\!L}
\newcommand{\intv}{\mathbb{I}}
\newcommand{\reltoeq}{\trianglerighteq}
\newcommand{\relto}{\vartriangleright}
\newcommand{\dimv}[2]{\raisebox{1ex}{$\overset{#1}{\underset{#2}{}}$}}
\algnewcommand{\LineComment}[1]{\State \(//\) #1}
\algnewcommand{\IfThenElse}[3]{
  \State \algorithmicif\ #1\ \algorithmicthen\ #2\ \algorithmicelse\ #3}
\algnewcommand{\ForDo}[2]
  { 
  \State \algorithmicforall\ #1\ \algorithmicdo\ #2
  }
\def\Int #1{\expandafter\Int@i#1\@nil}
\def\Int@i #1,#2\@nil{{#1{:}#2}}
\def\IntC #1{\expandafter\IntC@i#1\@nil}
\def\IntC@i #1,#2\@nil{{#1{,}#2}}
\def\itoi #1{\expandafter\itoi@i#1\@nil}
\def\itoi@i #1,#2,#3,#4\@nil{_{\Int{#1,#2}}^{\Int{#3,#4}}}
\DeclareRobustCommand{\rvdots}{%
  \vbox{
    \baselineskip4\p@\lineskiplimit\z@
    \kern-\p@
    \hbox{.}\hbox{.}\hbox{.}
  }}
\def\tikz@delimiter#1#2#3#4#5#6#7#8{%
  \bgroup
    \pgfextra{\let\tikz@save@last@fig@name=\tikz@last@fig@name}%
    node[outer sep=0pt,inner sep=0pt,draw=none,fill=none,anchor=#1,at=(\tikz@last@fig@name.#2),#3]
    {%
      {\nullfont\pgf@process{\pgfpointdiff{\pgfpointanchor{\tikz@last@fig@name}{#4}}{\pgfpointanchor{\tikz@last@fig@name}{#5}}}}%
      \delimitershortfall\z@
      \resizebox*{!}{#8}{$\left#6\vcenter{\hrule height .5#8 depth .5#8 width0pt}\right#7$}%
    }
    \pgfextra{\global\let\tikz@last@fig@name=\tikz@save@last@fig@name}%
  \egroup%
}
\newcommand{\mattikz}[1]{
   \begin{tikzpicture}[
      baseline = (p.center), 
      ampersand replacement=\&,
      decoration={
        markings,
        mark=
        at position 0.5
        with{
          \draw[-] (-2pt,-2pt) -- (2pt,2pt);
          \draw[-] (2pt,-2pt) -- (-2pt,2pt);
        }
      }]
      {#1}
    \end{tikzpicture}
}
\newcommand{\matheader}{
  \matrix[matrix of math nodes, column sep=0mm, row sep=0mm,
  inner sep = 0mm, left delimiter = {[},right delimiter = {]},
  every node/.append style={
    anchor=center,text depth = 0.375em,text
    height=0.875em,minimum width=1.25em}](p)
}
\newcommand{\rowempty}{
  \matrix[matrix of math nodes, column sep=0mm, row sep=0mm,
  inner sep = 0mm, left delimiter = {[},right delimiter = {]},
  nodes in empty cells,
  every node/.append style={
    anchor=center,text depth = 0em,text
    height=3mm, minimum width=1.8em}](p)
  {
    \vphantom{.}\\
  };
}
\newcommand{\rowarrow}[2]{
  \draw[->, red, postaction={decorate}] #1  to[out=30,in=330] #2;
}
\newcommand{\colarrow}[2]{
  \draw[->, red, postaction={decorate}] #1 to[out=240,in=300] #2;
}
\title{Matrix Method for Persistence Modules on Commutative Ladders of Finite Type}
\author{Hideto Asashiba}
\address{
  \begin{flushleft}
    Hideto Asashiba\\
    Department of Mathematics, Faculty of Science, Shizuoka University
  \end{flushleft}
}
\email{asashiba.hideto@shizuoka.ac.jp}
\author{Emerson G. Escolar}
\address{
  \begin{flushleft}
    Emerson G. Escolar\\
    Center for Advanced Intelligence Project, RIKEN
  \end{flushleft}
}
\email{emerson.escolar@riken.jp}
\author{Yasuaki Hiraoka}
\address{
  \begin{flushleft}
    Yasuaki Hiraoka\\
    Kyoto University Institute for Advanced Study, Kyoto University\\
    Center for Advanced Intelligence Project, RIKEN   
  \end{flushleft}
}
\email{hiraoka.yasuaki.6z@kyoto-u.ac.jp}
\author{Hiroshi Takeuchi}
\address{
  \begin{flushleft}
    Hiroshi Takeuchi\\
    Mathematics Department, Graduate School of Science, Tohoku University\\
    Research Fellow of Japan Society for the Promotion of Science
  \end{flushleft}
}
\email{hiroshi.takeuchi.s6@dc.tohoku.ac.jp}
\date{}
\thanks{This is a pre-print of an article published in Japan Journal of Industrial and Applied Mathematics. The final authenticated version is available online at: \url{https://doi.org/10.1007/s13160-018-0331-y}.}
\begin{document}

\begin{abstract}
  {
    The theory of persistence modules on the commutative ladders
  $\CL_n(\tau)$ provides an extension of persistent homology. However, an efficient
  algorithm to compute the generalized persistence diagrams is still
  lacking.
  In this work, we view a persistence module $M$ on $\CL_n(\tau)$ as a morphism between zigzag modules, which can be expressed in a block matrix form.
  For the representation finite case ($n\leq 4$), we provide an algorithm that uses certain permissible row and column operations to compute a normal form of the block matrix. In this form an indecomposable decomposition of $M$, and thus its persistence diagram, is obtained.
}
\end{abstract}

\keywords{Persistence modules,  Commutative ladders, Computational topology, Algorithms}
\subjclass[2000]{68W30, 16G20, 55N99}

\maketitle

\section{Introduction}
\label{sec:intro}

Recently, the paper \cite{PMCL} introduced the study of persistence
modules on the commutative ladders of finite type. This was
motivated in part by a need to study simultaneously robust and common
topological features using the ideas of persistent homology
\cite{topo_pers}. Let us first give an overview of this
background and motivation.

One way to construct persistent homology is the following. Let $\mathbb{X}$ be a \emph{filtration}, a non-decreasing sequence of spaces
\[
  \mathbb{X}: X_1 \subset X_2 \subset \hdots \subset X_n.
\]
Applying a homology functor
$H(-)$ with coefficient field $K$, we obtain a sequence
\begin{equation}
  \label{eq:PH}
	H(\mathbb{X}) \colon H(X_1) \rightarrow H(X_2) \rightarrow \cdots \rightarrow H(X_n)
\end{equation}
of $K$-vector spaces and induced linear maps between them, called the
persistent homology of the filtration.

Diagram \eqref{eq:PH} above can be interpreted in the language of the
representation theory of (bound) quivers. This leads one to considering
persistence modules in general, of which $H(\mathbb{X})$ in Diagram~\eqref{eq:PH} is one
example. With this point of view, a persistence module can be taken to
be synonymous to a representation of a bound quiver.

Assuming that $H(X_i)$ is finite dimensional for $i\in\{1,\hdots,n\}$, it is known that the persistence module $H(\mathbb{X})$ can be decomposed into the so-called \emph{interval representations}. The decomposition into intervals can be
used to study the persistent, robust, or multiscale topological features in $\mathbb{X}$. The length of each interval (its lifetime) can be interpreted as a measure of persistence or robustness of the topological feature.

More generally, different classes of persistence modules may be used to
study, using similar ideas, diagrams of spaces that are not filtrations.
As an example, zigzag persistent homology \cite{zigzag} can be used
to analyze common topological features in a collection of spaces. Here, let us consider the following simple example of zigzag
persistence. Given two spaces $X$ and $Y$, we can form the diagram
\[
\mathbb{X}:
\begin{tikzcd}
X \rar{\subset} & X \cup Y & Y \lar[swap]{\supset}
\end{tikzcd}.
\]
Applying $H(-)$, we obtain the diagram
\begin{equation}
  \label{eq:zigzagunion}
  H(\mathbb{X}):
  \begin{tikzcd}
    H(X) \rar{} & H(X \cup Y) & H(Y) \lar[swap]{}
  \end{tikzcd}
\end{equation}
of homology vector spaces and induced linear maps.
Similar to the classical persistent homology case, it is known that a
zigzag module, for example $H(\mathbb{X})$ in
Diagram~\eqref{eq:zigzagunion}, can be decomposed into interval zigzag modules.
Those that are nonzero from the left (at $X$), through the middle, and to the right (at $Y$) correspond to topological features that are
common to $X$ and $Y$.

A shortcoming of the above is that only robust features or only common
features can be studied, but not both simultaneously. A motivation for
using \emph{persistence modules on commutative ladders} \cite{PMCL} is
to deal with simultaneously common and robust topological features.
This can be thought of as a partial generalization towards
multidimensional persistence \cite{multipers}.

Let us review how we use commutative ladders to treat simultaneously
common and robust features. Suppose that $X_1 \subset X_2$ and $Y_1
\subset Y_2$ are two-step filtrations of spaces $X$ and $Y$. To study
the robust and common features shared between them, form the following
commutative diagram of homology vector spaces and linear maps:
\begin{equation}
\label{eq:comm_union}
	\begin{tikzcd}
		H(X_{2}) \rar & H(X_{2}\cup Y_{2}) \rar[leftarrow] & H(Y_{2}) \\
		H(X_{1}) \rar \uar & H(X_{1}\cup Y_{1}) \rar[leftarrow] \uar & H(Y_{1}) \uar
	\end{tikzcd}
\end{equation}
where the linear maps are induced from the respective inclusions.
In this diagram, the vertical direction captures the robust features,
while the horizontal direction captures the common features between
$X$ and $Y$. Indecomposable direct summands isomorphic
to
\[
  \begin{tikzcd}
    K \rar{1} & K \rar[leftarrow]{1} & K \\
    K \rar{1}\uar{1} & K \rar[leftarrow]{1}\uar{1} & K\uar{1}
  \end{tikzcd}
\]
if any, represent the simultaneously robust and common features.

The above discussion provides some motivations for our interest in persistence modules.
In this work, we shall not discuss what particular class of spaces and which homology functor $H(-)$ are to be used. Instead, we take a persistence module as our starting point. In particular, we consider persistence modules on the commutative ladders $\CL_n(\tau)$, which we
define in Section~\ref{subsec:cl}. Diagram~\eqref{eq:comm_union} is an
example of a persistence module on the bound quiver
\begin{equation} \label{lfb}
  \CL_3(fb):
  \begin{tikzcd}[ampersand replacement = \&]
    \circ \rar[""{name=U1}] \& \circ \arrow[r,-,""{name=U2}]\& \circ\lar \\ 
    \circ \rar[""{name=D1}] \uar \& 
    \circ \uar\arrow[-,r,""{name=D2}] 
    \arrow[phantom, to path = {(U1) -- node[pos = 0.7, scale = 1.5] {$\circlearrowleft$}(D1)}]{}\& 
    \circ \lar\uar 
    \arrow[phantom, to path = {(U2) -- node[pos = 0.7, scale = 1.5] {$\circlearrowleft$}(D2)}]{}
  \end{tikzcd}.
\end{equation}
As in classical persistence,  an indecomposable decomposition of a
persistence module plays a key role in understanding its different
types of persistent topological features. In the general case however, the indecomposable
summands are not completely given by intervals or analogues of intervals.

The algorithm provided in \cite{PMCL} computes an indecomposable
decomposition by performing changes of bases on the individual vector
spaces in a given persistence module and extracting direct summands.
This involves working with the persistence module by its collection of
linear maps.

Here, we take a different point of view and reconsider a persistence
module on a commutative ladder as a morphism from its bottom row to
its top row, via Theorem~\ref{isothm} in Subsection~\ref{subsec:repn_to_arrow}.
Note that the bottom and top rows are nothing but zigzag
modules, and thus can be decomposed into interval zigzag modules. Using this fact, the
morphism can be written in a block matrix form with respect to these decompositions. In
essence, we treat the persistence module as one matrix, but with
certain restrictions induced from the structure of homomorphism spaces
between interval zigzag modules. We make these ideas precise in
Subsections \ref{subsec:matrixformalism} and
\ref{subsec:permissibles}.

We then provide a procedure for computing an indecomposable
decomposition using the above described matrix formalism. The idea is
to use column and row operations, as in elementary linear algebra, to
find normal forms. While the matrix has entries given by
homomorphisms between zigzag modules, the procedure can be
reinterpreted to involve only $K$-matrices, provided certain
restrictions on the permissible operations on the matrices are
respected. These restrictions are also derived from the structure of
the homomorphism spaces between the intervals.

The procedure is formalized in Algorithm~\ref{algo:main} in Section 4.2.2. The main theorem of this paper is the following.
\begin{restatable}{theorem}{algomainthm}
  \label{main_theorem}
  Assume Algorithm~\ref{algo:main}
  is called with the block matrix problem corresponding to a persistence
  module $M$ on a commutative ladder of finite type. Then
  Algorithm~\ref{algo:main} terminates and the input matrix is
  transformed to an isomorphic block matrix consisting only of
  identity, zero, and strongly zero blocks, and whose indecomposable
  decomposition corresponds to an indecomposable decomposition of $M$.
\end{restatable}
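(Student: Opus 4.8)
The plan is to decompose the proof into two essentially independent parts: (1) that the algorithm terminates, and (2) that the output has the asserted form and encodes an indecomposable decomposition of $M$. For termination, I would first fix a monovariant — some nonnegative integer-valued quantity attached to the block matrix — that strictly decreases (or decreases in a lexicographic combination) each time a permissible row/column operation is applied in a way that makes genuine progress. A natural candidate is a tuple recording, for each pair of interval summands in the source and target zigzag decompositions, the rank of the corresponding $K$-block, ordered so that "clearing" an entry (turning a block into a zero or strongly zero block, or pivoting out an identity block) lowers the tuple. One must check that every branch of Algorithm~\ref{algo:main} either performs such a progress step or else halts; since the commutative ladder is of finite type ($n \le 4$), the source and target zigzag modules have boundedly many interval summands, so the tuple lives in a finite poset and the process cannot loop. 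The subtlety here is that permissible operations are constrained by the homomorphism spaces between interval zigzag modules, so not every "clearing" move available in ordinary Gaussian elimination is legal; I would need to verify that whenever the algorithm is not yet in normal form, at least one \emph{permissible} progress move exists — this is where the structure theory of $\Hom$ between zigzag intervals recorded in Subsections~\ref{subsec:matrixformalism}--\ref{subsec:permissibles} does the work.

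For correctness, the first observation is that permissible row and column operations correspond exactly to applying isomorphisms to the source and target zigzag modules, hence by Theorem~\ref{isothm} (identifying $M$ with a morphism of zigzag modules) they produce a block matrix isomorphic to the original $M$ as a persistence module on $\CL_n(\tau)$. Therefore the output block matrix $M'$ is isomorphic to $M$, and it suffices to read off an indecomposable decomposition from $M'$. Here I would argue that once $M'$ consists only of identity, zero, and strongly zero blocks, it splits as a direct sum of "atomic" pieces: each identity block pairs one source interval with one target interval and contributes an indecomposable summand (the graph of that identity, an indecomposable representation of the ladder — one of the non-interval indecomposables in the $n=3,4$ classification, or an interval, depending on the two zigzag intervals involved), while each source interval not used by any identity block and each target interval not used by any identity block contributes an indecomposable summand supported only on the bottom, resp. top, row. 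The block structure together with the "strongly zero" condition guarantees there are no remaining morphisms linking these pieces, so the decomposition is genuine; that each listed piece is indecomposable is a finite check against the known list of indecomposables of $\CL_n(\tau)$ for $n \le 4$.

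The main obstacle I anticipate is the termination argument, specifically exhibiting a monovariant that is simultaneously (a) strictly decreased by \emph{every} progress step the algorithm can take, including the more delicate moves that use a nontrivial (non-identity, non-scalar) homomorphism between zigzag intervals as a "pivot," and (b) bounded, so that infinite descent is impossible. Ordinary rank counts may fail to be monotone under such moves because clearing one block can inflate another; the fix is likely a carefully weighted or lexicographically ordered invariant that accounts for the partial order on interval pairs induced by the restriction structure — roughly, operations are only permitted "downhill" in this order, so an entry that gets filled in is always strictly smaller in the ordering than the entry being cleared. Making this precise, and checking it against each case branch of Algorithm~\ref{algo:main}, is the technical heart of the proof. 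Once termination is secured, correctness follows from the isomorphism-invariance of permissible operations plus a bookkeeping argument and the finite-type classification, which I expect to be routine by comparison.
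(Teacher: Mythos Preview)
Your proposal outlines a conceptually clean two-part argument (monovariant for termination, splitting for correctness), but the paper proceeds quite differently: it verifies termination and correctness by an explicit case-by-case trace of the algorithm for each orientation $\tau$ with $n\le 4$ (the cases $f$, $fb$, $fff$ are written out, the rest declared similar). The authors state outright that they ``were unable to find a proof that does not involve manually checking each possible orientation,'' so your approach is not merely different but attempts something the paper could not do.

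There are two genuine gaps. First, your termination argument relies on a partial order on interval pairs along which permissible operations move ``downhill,'' but the relation $\reltoeq$ governing permissibility is explicitly \emph{not} transitive (the paper gives the example $\intv[2,2]\reltoeq\intv[1,2]\reltoeq\intv[1,1]$ yet $\intv[2,2]\not\reltoeq\intv[1,1]$), so no such order is available off the shelf. Worse, the recursive procedures \textproc{row\_fix}/\textproc{col\_fix} and \textproc{erasable} can create side effects that must themselves be repaired, and the paper singles out exactly this --- showing the recursion bottoms out --- as the hard part, resolved only by tracing each orientation. A rank-tuple monovariant is a reasonable hope, but you would need to exhibit one that survives these recursive repairs, and nothing in the proposal does so.

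Second, your correctness argument asserts that in the normal form ``each identity block pairs one source interval with one target interval and contributes an indecomposable summand.'' This is false: the paper exhibits indecomposables assembled from \emph{several} identity blocks that cannot be separated by permissible operations --- for instance $\dimv{111}{121}$ arises from a summand $\smat{E & E}$ with two identity blocks in one row, and $\dimv{2210}{1221}$ from a configuration with three identity blocks. The impermissible operations (coming from vanishing $\Hom$-spaces) glue certain identity blocks together, and the resulting ``connected components'' of identity blocks, not the individual blocks, correspond to indecomposables. Your splitting argument would need to identify these components and match them against the known Auslander--Reiten list, which is again effectively the case analysis the paper performs.
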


Finally, we note that our problem of computing a normal
form of a block matrix under certain permissible operations falls
under a more general class of problems called ``matrix problems''.
Matrix problems can be given a theoretical framework via the
representation theory of bocses \cite{boevey,rojter}. In this framework, the matrix reductions can be interpreted as reduction functors that induce equivalences of representation categories of bocses.
In this work, however, we have kept the necessary
theoretical background to a minimum and expressed
Algorithm~\ref{algo:main} in terms of block matrices and permissible operations.


\section{Background}
\subsection{Quivers and Persistent Homology}
\label{subsec:quiverph}

A \emph{quiver} $Q = (Q_0, Q_1)$ is a directed graph with set of
vertices $Q_0$ and set of arrows $Q_1$. An arrow $\alpha \in Q_1$ from
a vertex $a \in Q_0$ to a vertex $b \in Q_0$ is denoted by $\alpha
\colon a \rightarrow b$. In this case, $a$ is called the source of
$\alpha$, and $b$ is its target. A \emph{path} $p = (a \mid \alpha_1
\dots \alpha_\ell \mid b)$ of length $\ell$ from a vertex $a$ to a
vertex $b$ is a sequence of $\ell$ arrows $\alpha_1, \dots,
\alpha_\ell$, where the source of $\alpha_1$ is $a$, the target of
$\alpha_\ell$ is $b$, and the target of $\alpha_i$ is equal to the
source of $\alpha_{i + 1}$ for all $i \in \{1,\hdots,\ell-1\}$. Note that paths of length $0$
are allowed. These are the paths $e_a
= (a||a)$, called the stationary path at $a$, for each vertex $a$. Moreover, for each arrow $\alpha:a\rightarrow b$, we use the same symbol to denote the corresponding path $\alpha = (a\mid\alpha\mid b)$.

Let $K$ be a field, which we fix throughout this work. The
\emph{path algebra} $\KQ$ of a quiver $Q$ is the following
$K$-algebra. As a $K$-vector space, it is freely generated by all
paths in $Q$. The multiplication in $\KQ$ is defined by setting
\[
	(a \mid \alpha_1 \cdots \alpha_\ell \mid b)(c \mid \beta_1 \cdots \beta_m \mid d) = 
  \begin{cases}
    (a \mid \alpha_1 \cdots \alpha_\ell \beta_1 \cdots \beta_m \mid d) & {b = c}, \\
    0 & \text{otherwise;} 
  \end{cases}
\]
and extending $K$-linearly. In this work, we consider only finite
quivers $(|Q_0|, |Q_1| < \infty)$ without any oriented
cycles\footnote{An oriented cycle is a path with nonzero length whose
  source is equal to its target.}. With this assumption, $\KQ$ is a
finite-dimensional $K$-algebra.

Let $\{w_1,\hdots, w_m\}$ be a finite set of $m$ paths that share a
common source $s\in Q_0$ and a common target
$t\in Q_0$. A linear combination
\begin{equation*}
	\rho = \sum_{i = 1}^{m}{c_i w_i} \in \KQ, \text{ where } c_i \in K,
\end{equation*}
is called a \emph{relation} in $Q$.

A bound quiver $(Q,P)$ is a pair of a quiver $Q$ together with a set
of relations $P = \{\rho_i\}_{i\in T}$. The two-sided ideal of $\KQ$
generated by a set of relations $P = \{\rho_i\}_{i\in T}$ is
denoted by $\langle P \rangle$. The algebra of a bound quiver $(Q,P)$
is the quotient $A = \KQ/\langle P \rangle $.

A \emph{representation of a quiver $Q$}, denoted $M = (M_a,
\varphi_{\alpha})_{a \in Q_0, \alpha \in Q_1}$, is a collection of a
finite dimensional vector space $M_a$ for each $a \in Q_0$ and a
linear map $\varphi_{\alpha} \colon M_a \rightarrow M_b$ for each arrow
$\alpha \colon a \rightarrow b$. 

Let $M = (M_a, \varphi_{\alpha})_{a \in Q_0, \alpha \in
  Q_1}$ be a representation $Q$, and $w = (a \mid \alpha_1 \cdots
\alpha_\ell \mid b)$ a path in $Q$. Define the evaluation of $M$ on
the path $w$ to be $\vp_w = \vp_{\alpha_\ell} \circ \dots \circ \vp_{\alpha_1}: M_a \rightarrow M_b$.
The representation $M$ is said to be a \emph{representation of a bound quiver} $(Q,P)$ if
$\varphi_{\rho} \doteq \sum\limits_i{c_{i} \varphi_{w_{i}}} = 0$
for all relations $\rho = \sum\limits_i{c_{i} w_{i}} \in P$.

For example, let $Q$ and $M$ be the following quiver and representation:
\begin{equation} 
  \label{cl2}
	Q:
	\begin{tikzcd}
		\overset{3}{\circ} \rar[rightarrow]{\alpha} &
    		\overset{4}{\circ} \\
		\underset{1}{\circ} \rar[rightarrow]{\gamma} \uar[rightarrow]{\beta} &
		\underset{2}{\circ} \uar[rightarrow,swap]{\delta}
	\end{tikzcd}
  \text{ and }
  M:
	\begin{tikzcd}
		M_{3} \rar[rightarrow]{\vp_\alpha} &
    		M_{4} \\
		M_1 \rar[rightarrow]{\vp_\gamma} \uar[rightarrow]{\vp_\beta} &
		M_2 \uar[rightarrow,swap]{\vp_\delta}
	\end{tikzcd}
\end{equation}
respectively. If $P = \{ \rho = \gamma \delta - \beta\alpha \}$, then $M$ is a representation of
$(Q,P)$ if and only if $\vp_\rho = \vp_\delta \vp_\gamma - \vp_\alpha \vp_\beta =
0$. In other words, this implies that $M$ in Diagram~\eqref{cl2} forms a commutative diagram of
$K$-vector spaces and linear maps. In general, we define the set of
\emph{commutative relations} $C$ of a quiver $Q$ to be the set of
relations of the form $p - p'$ where $p$ and $p'$ are any two different
paths from vertices $a$ to $b$, for any pair of vertices $a$ and $b$.

\begin{definition}
The \emph{representation category} $\rep Q$ of $Q$ is the following category.
\begin{itemize}
\item Objects: finite-dimensional representations of the quiver $Q$.
\item Morphisms: Let $M = (M_a, \varphi_{\alpha})_{a \in Q_0, \alpha
    \in Q_1}$ and $N = (N_a, \psi_{\alpha})_{a \in Q_0, \alpha \in
    Q_1}$ be representations of $Q$. A morphism $f:M \rightarrow N$ is
  a collection of $K$-linear maps $f_a: M_a \rightarrow N_a$ such that
  for all arrows $\alpha:a\rightarrow b$ in $Q$, the diagram
  \begin{equation}
    \label{eq:repn_hom}
    \begin{tikzcd}
      M_a \rar{\varphi_\alpha} \dar{f_a} & M_b \dar{f_b} \\
      N_a \rar{\psi_\alpha} & N_b
    \end{tikzcd}
  \end{equation}
  is commutative. The collection of morphisms from $M$ to $N$ is
  denoted by $\Hom(M,N)$.
\item Composition: for $f = \{f_a\}_{a\in Q_0} \colon
  V \rightarrow W$ and $g = \{g_a\}_{a\in Q_0} \colon
  W \rightarrow U$, $g \circ f =
  \{g_af_a\}_{a\in Q_0}$.
\end{itemize}
\end{definition}
The representation category $\rep(Q,P)$ of a bound quiver $(Q,P)$ is
the full subcategory of $\rep{Q}$ with objects consisting of the
representations of $(Q,P)$.

The \emph{direct sum} $M \oplus N$ of representations
$M = (M_a, \varphi_{\alpha})$
and
$N = (N_a, \psi_{\alpha})$
of $(Q,P)$ is the representation with the vector space $M_a \oplus N_a$ for each vertex
$a \in Q_0$ and the linear map $\varphi_{\alpha} \oplus
\psi_{\alpha} \colon M_a \oplus N_a \rightarrow M_b \oplus N_b$
for each arrow $\alpha \colon a \rightarrow b$.

A representation $M \neq 0$ is
\emph{indecomposable} if $M \cong N \oplus N'$ implies $N = 0$ or $N' =
0$. From the Krull-Remak-Schmidt theorem, every representation $M$ can
be decomposed into a sum of indecomposable representations $M \cong W_1
\oplus \dots \oplus W_s$, unique up to isomorphism and permutation of terms. A quiver $Q$ or a
bound quiver $(Q,P)$ is said to be \emph{finite type}
(\emph{representation-finite}) if the number of isomorphism classes of
its indecomposable representations is finite, and is \emph{infinite type}
(\emph{representation-infinite}) otherwise.
For more details on the representation theory, see for example \cite{blue}.

Let $f$ and $b$ be symbols, representing ``forward'' and
``backward''. An \emph{orientation} $\tau$ is a
sequence $\tau = (\tau_1,\hdots,\tau_{n-1})$ where $\tau_i$ is
either $f$ or $b$ for each ${1 \leq i \leq n-1}$. Given $n\geq 1$ and an orientation $\tau$, define the quiver
\[
	\A_n(\tau) \colon
	\begin{tikzcd}
		\overset{1}{\circ} \rar[leftrightarrow] & 
    \overset{2}{\circ} \rar[leftrightarrow] & 
    \cdots \rar[leftrightarrow] & 
    \overset{n}{\circ},
	\end{tikzcd}
\]
where the $i$-th arrow
$\overset{i}{\circ}\longleftrightarrow\overset{i+1}{\circ}$ is
$\longrightarrow$ if $\tau_i = f$, and is $\longleftarrow$ if
$\tau_i = b$. We say that a quiver $\A_n(\tau)$ is $\A_n$-type.

From Gabriel's theorem \cite{Gabriel}, any $\A_n$-type quiver is
representation-finite. For $1\leq b\leq d \leq n$, define the
\emph{interval representation}
\[
	\intv[b, d] \colon 0 \longleftrightarrow \cdots
 \longleftrightarrow 0
  \longleftrightarrow \overset{b\text{-th}}{K} \longleftrightarrow K
  \longleftrightarrow \cdots
 \longleftrightarrow \overset{d\text{-th}}{K} 
 \longleftrightarrow 0 \longleftrightarrow \cdots
  \longleftrightarrow 0,
\]
in $\rep{\A_n(\tau)}$, which consists of copies of the vector space $K$ from indices $b$ to $d$ and $0$ elsewhere, and where the maps between the vector spaces $K$ are identity
maps and zero otherwise. It is known that $\left\{\intv[b,d]\right\}_{1\leq b
  \leq d \leq n}$ gives a complete list of indecomposable
representations of $\A_n(\tau)$ up to isomorphism. Thus, any representation $M$
of $\A_n(\tau)$ can be decomposed as a direct sum
\begin{equation} 
\label{eq:decompo_an}
M \cong \bigoplus\limits_{1 \leq b \leq d \leq n} \intv[b, d]^{m_{b, d}}
\end{equation}
where the numbers $m_{b, d} \in \mathbb{Z}_{\geq0}$ are multiplicities.

Classical persistent homology can be viewed as a representation (a
\emph{persistence module}) $M$ of $\A_n(\tau)$ with the orientation $\tau
= ff \cdots f$. Each interval representation $\intv[b,d]$ that appears
as a direct summand in a given persistence module tracks a homology
class which is born in $H(X_b)$ and persists up to $H(X_d)$. The
lengths of these intervals can be taken as encoding the persistence or
robustness of the homological features of the filtration. 

The \emph{persistence diagram} $D_M$ of a persistence module $M$ on
$\A_n(\tau)$ is the multiset
\begin{equation*}
	D_M = \set{(b, d) \text{ with  multiplicity } m_{b, d} | 1 \leq b \leq d \leq n},
\end{equation*}
where the multiplicities $m_{b,d}$ are determined by an indecomposable decomposition of $M$ as in Eq.~\eqref{eq:decompo_an}.
The persistence diagram can be visualized by plotting
the points $(b,d)$ together with multiplicities $m_{b,d}$ on a plane, and provides a compact way to represent
the presence and lifespans of the persistent topological features.

The ideas of persistent homology have been extended to a wide variety of underlying quivers.
For example, consider a collection $X_1, X_2, \dots, X_M$ of
spaces $X_j$ that do not form a filtration. Instead, one can form the diagram
\[
	\begin{tikzcd}
		X_1 \rar[hookrightarrow] &
		X_1 \cup X_2 \rar[hookleftarrow] &
		X_2 \rar[hookrightarrow] &
    X_2 \cup X_3 \rar[hookleftarrow] &
		\cdots \rar[hookleftarrow] &
		X_M
	\end{tikzcd}	
\]
and obtain the
persistence module
\begin{equation} \label{zz}
	\begin{tikzcd}[column sep=1.5em]
		H(X_1) \rar[rightarrow] &
		H(X_1 \cup X_2) \rar[leftarrow] &
		H(X_2) \rar[rightarrow] &
    H(X_2 \cup X_3)  \rar[leftarrow] &
		\cdots  \rar[leftarrow] &
		H(X_M)
	\end{tikzcd}
\end{equation}
which is a representation of $\A_n(fbfb\cdots fb)$.

Since the underlying quiver is $\A_n$-type, the indecomposable representations are given by the
intervals. An indecomposable decomposition of Eq.~\eqref{zz}, gives the persistent homological features in the collection $X_1, \dots, X_M$. In this case, the interval
representations can be interpreted as features common among certain spaces.
This is one example of a persistence module over a quiver of
$\A_n$-type, which in general are called \emph{zigzag persistence modules}. For more details, see \cite{zigzag}.


\subsection{Persistence Modules on Commutative Ladders}
\label{subsec:cl}
\begin{definition}
  Let $\tau=(\tau_1,\hdots,\tau_{n-1})$ be an orientation. The ladder quiver $L_n(\tau)$ is
\begin{equation*}
	L_n(\tau):
	\begin{tikzcd}
		\overset{1'}{\circ} \rar[leftrightarrow] &
    		\overset{2'}{\circ} \rar[leftrightarrow] &
		\cdots \rar[leftrightarrow] &
		\overset{n'}{\circ}
    \\
		\underset{1}{\circ} \rar[leftrightarrow] \uar[rightarrow] &
		\underset{2}{\circ} \rar[leftrightarrow] \uar[rightarrow] &
		\cdots \rar[leftrightarrow] &
		\underset{n}{\circ} \uar[rightarrow]
	\end{tikzcd}
\end{equation*}
where the directions of the arrows on both the top and bottom rows are determined by the orientation $\tau$.
The \emph{commutative ladder} $\CL_n(\tau)$ is the ladder quiver $L_n(\tau)$ bound by
commutative relations. A persistence module on the commutative ladder
$\CL_n(\tau)$ is a representation of $\CL_n(\tau)$.
\end{definition}

Recall that the \emph{\AR quiver} $\Gamma = (\Gamma_0, \Gamma_1)$ of a
bound quiver $(Q,P)$ is another quiver whose vertices $\Gamma_0$ are
given by all isomorphism classes of indecomposable representations of
$(Q,P)$, and whose arrows are given by the following. For every pair
of vertices $[M], [N] \in \Gamma_0$, $\Gamma$ has an arrow $[M]
\rightarrow [N]$ if and only if there exists an irreducible
morphism\footnote{An \emph{irreducible morphism} is a morphism
  satisfying the following two conditions: (i) $f$ is neither a
  retraction nor a section. (ii) For any factorization $f = f_1 \circ
  f_2$, either $f_1$ is a retraction or $f_2$ is a section.} $f \colon
M \rightarrow N$.

The paper \cite{PMCL} shows that for any orientation $\tau$,
$\CL_n(\tau)$ is representation-finite if and only if $n \leq 4$.
The \AR quivers of the representation-finite cases are
listed in the paper \cite{PMCL}. Figure \ref{fig:arfb} here shows the \AR
quiver of $\CL_3(fb)$.

\begin{figure}
	\centering
	\includegraphics[scale=0.3]{./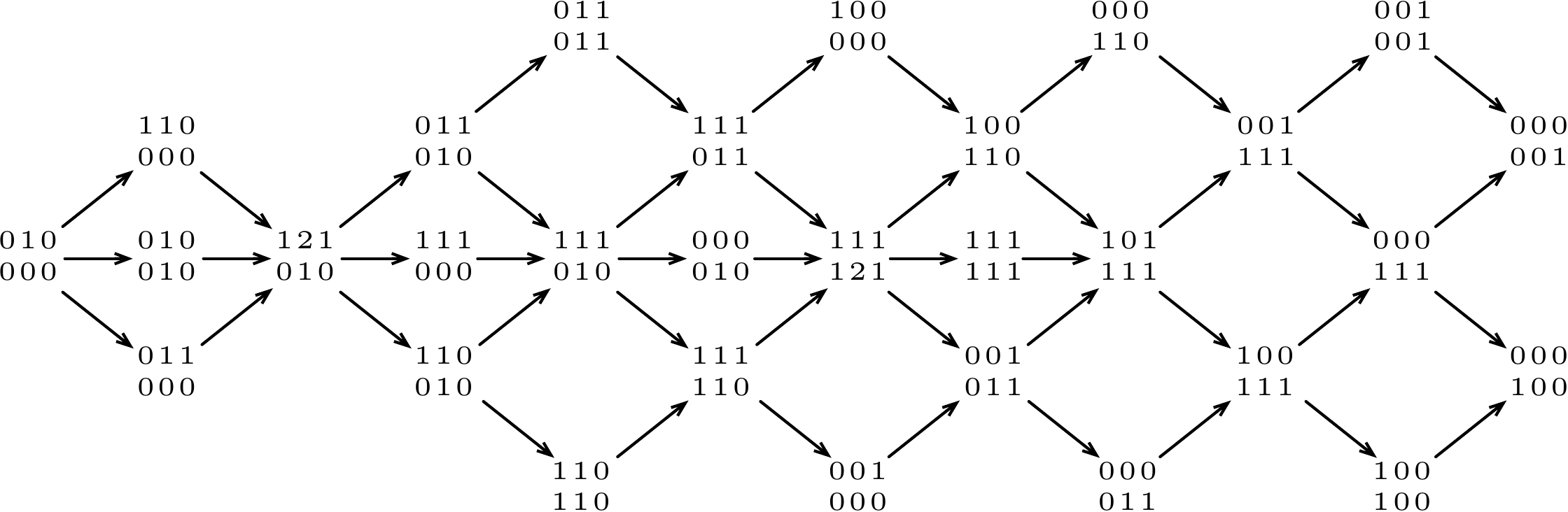}
	\caption{\AR quiver of $\CL_3(fb)$} \label{fig:arfb}
\end{figure}

The vertices of the \AR quiver in Figure~\ref{fig:arfb} are denoted by their dimension
vectors. Recall that the \emph{dimension vector} $\dim M$ of a
representation $M$ is the vector of dimensions (as $K$-vector spaces)
of $M(a)$ for vertices $a\in Q_0$. It is helpful to write the
dimension numbers $\dim_K M(a)$ corresponding to the positions
of the vertices $a\in Q_0$. For example, the dimension vector of the
indecomposable representation
\begin{equation*}
	\begin{tikzcd}
		K \rar &
    		0 \rar[leftarrow] &
		K \\
		K \rar \uar &
		K \rar[leftarrow] \uar &
		K \uar
	\end{tikzcd}
\end{equation*}
is denoted as $\dimv{101}{111}$. While the dimension vector is
invariant under isomorphism, nonisomorphic representations may have
the same dimension vector in general.

Moreover, the entries of the dimension vectors of
indecomposable representations may exceed $1$. For example, Figure~\ref{fig:arfb} has vertices  $\dimv{121}{010}$ and $\dimv{111}{121}$ representing the indecomposable representations
\begin{equation}
\label{eq:dim2_indecs}
	\begin{tikzcd}
		K \rar{\smat{1\\0}} &
    K^2 \rar[leftarrow]{\smat{0\\1}} &
		K \\
		0 \rar[rightarrow] \uar[rightarrow] &
		K \rar[leftarrow] \uar[rightarrow]{\smat{1\\1}}&
		0 \uar[rightarrow]
	\end{tikzcd}
  \text{ and }
  \begin{tikzcd}[ampersand replacement = \&]
		K \rar{1} \&
    K \rar[leftarrow]{1} \&
		K \\
		K \rar{\smat{1\\0}} \uar[rightarrow]{1} \&
		K^2 \rar[leftarrow]{\smat{0\\1}}
      \uar[rightarrow]{\smat{1&1}} \&
		K \uar[rightarrow,swap]{1}
	\end{tikzcd}
\end{equation}
respectively.

Finally we recall the definition of the \emph{persistence diagram} of
a representation $M$ of $\CL_n(\tau)$. By the above
considerations, $M$ has
\begin{equation*}
	M \cong \underset{[I]\in \Gamma_0}{\bigoplus} I^{k_{[I]}} \text{ for some }  k_{[I]} \in \mathbb{Z}_{\geq0},
\end{equation*}
where $\Gamma = (\Gamma_0, \Gamma_1)$ is the \AR quiver of
$\CL_n(\tau)$. The persistence diagram of $M$ is the map
\[
  \begin{array}{rcccl}
    D_M & \colon & \Gamma_0 & \rightarrow & \mathbb{Z}_{\geq0}\\
    & & [I] & \mapsto & k_{[I]}.
  \end{array}
\]
In the representation finite case, $\Gamma$ is a finite quiver, and we draw $D_M$ by labelling the vertices $[I]$ of $\Gamma$ with the numbers $k_{[I]}$.


\section{Main Results}
\label{sec:main_results}
We provide a decomposition algorithm for persistence modules on
commutative ladders of finite type by reinterpreting the modules as
matrices of homomorphisms between interval representations.

\subsection{From Representations to Arrows}
\label{subsec:repn_to_arrow}

\begin{definition}
  The \emph{arrow category} $\arr{\rep{Q}}$ of $\rep{Q}$ is the following category.
\begin{itemize}
\item Objects: All morphisms $\phi:V\rightarrow W$ of $\rep{Q}$, for all objects $V$ and $W$ of $\rep{Q}$.

\item Morphisms: A morphism $F = (F_V, F_W) \colon \phi_1 \rightarrow \phi_2$
  from an object $\phi_1 \colon V_1 \rightarrow W_1$ to $\phi_2 \colon
  V_2 \rightarrow W_2$ is a pair of morphisms $(F_V \colon V_1 \to
  V_2$, $F_W \colon W_1 \to W_2)$ of $\rep Q$, such that
\begin{equation*}
	\begin{tikzcd}
		V_2 \rar[rightarrow, "\phi_2"{name = U}] & W_2 \\
		V_1 \rar[rightarrow, "\phi_1"{name = D}] \uar[rightarrow, "F_V"] &
		W_1 \uar[rightarrow, swap, "F_W"]
    \arrow[phantom, to path = {(U) -- node[pos = 0.7, scale = 1.5] {$\circlearrowleft$}  (D)}]{}
	\end{tikzcd}
\end{equation*}
commutes.
\item Composition: Given $F = (F_V, F_W) \colon \phi_1 \rightarrow
  \phi_2$ and $G = (G_V, G_W) \colon \phi_2 \rightarrow \phi_3$
  \[
    G\circ F = (G_VF_V, G_WF_W)
  \]
\end{itemize}
\end{definition}
In this context, we call objects of the arrow category as arrows to
distinguish them from objects of the base category $\rep{Q}$.

\begin{theorem}
  \label{isothm}
  Let $\tau$ be an orientation. There is an isomorphism of
  $K$-categories
	\[
		\rep{\CL_n(\tau)} \cong \arr{\rep{\A_n(\tau)}}.
	\]
\end{theorem}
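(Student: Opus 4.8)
The plan is to construct an explicit $K$-linear functor $\Phi \colon \rep{\CL_n(\tau)} \to \arr{\rep{\A_n(\tau)}}$ and its inverse, and check they are mutually inverse isomorphisms of $K$-categories. The key observation is purely combinatorial: the vertex set of $L_n(\tau)$ splits as a bottom copy $\{1,\dots,n\}$ and a top copy $\{1',\dots,n'\}$, each inducing a subquiver isomorphic to $\A_n(\tau)$, and the remaining arrows are exactly the vertical arrows $i \to i'$. So on objects, given a representation $M$ of $\CL_n(\tau)$, I would let $M_{\mathrm{bot}}$ be the restriction of $M$ to the bottom row (a representation of $\A_n(\tau)$), let $M_{\mathrm{top}}$ be the restriction to the top row, and let $\phi_M \colon M_{\mathrm{bot}} \to M_{\mathrm{top}}$ be the morphism of $\rep{\A_n(\tau)}$ whose component at vertex $i$ is the linear map $M$ assigns to the vertical arrow $i \to i'$. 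The commutativity relations binding $\CL_n(\tau)$ are precisely the squares, and these say exactly that the collection of vertical maps is a morphism in $\rep{\A_n(\tau)}$ — so $\phi_M$ is well-defined, and conversely any arrow in $\arr{\rep{\A_n(\tau)}}$ reassembles into a representation of $\CL_n(\tau)$ satisfying the commutativity relations. On morphisms, a morphism $f \colon M \to N$ in $\rep{\CL_n(\tau)}$ is a tuple of maps indexed by all vertices; I would send it to the pair $(f_{\mathrm{bot}}, f_{\mathrm{top}})$ of its restrictions to the bottom and top rows. The commutativity condition for $f$ at the horizontal arrows says each restriction is a morphism in $\rep{\A_n(\tau)}$; the commutativity condition at the vertical arrows says exactly that the square
\[
  \begin{tikzcd}
    N_{\mathrm{bot}} \rar{\phi_N} & N_{\mathrm{top}} \\
    M_{\mathrm{bot}} \rar{\phi_M} \uar{f_{\mathrm{bot}}} & M_{\mathrm{top}} \uar[swap]{f_{\mathrm{top}}}
  \end{tikzcd}
\]
commutes, i.e. $(f_{\mathrm{bot}}, f_{\mathrm{top}})$ is a morphism in $\arr{\rep{\A_n(\tau)}}$.

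Next I would define the candidate inverse $\Psi$ in the obvious way: an arrow $\phi \colon V \to W$ goes to the representation of $L_n(\tau)$ with $V$ on the bottom row, $W$ on the top row, the structure maps of $V$ and $W$ on the horizontal arrows, and the components $\phi_i$ on the vertical arrows; this satisfies the commutative relations because $\phi$ is a morphism in $\rep{\A_n(\tau)}$, so $\Psi(\phi)$ genuinely lies in $\rep{\CL_n(\tau)}$. A morphism $(F_V, F_W)$ goes to the tuple combining the components of $F_V$ and $F_W$. Then I would verify $\Psi \circ \Phi = \Id$ and $\Phi \circ \Psi = \Id$ on both objects and morphisms — these are immediate once the bookkeeping of indices is set up, since both composites literally reassemble a tuple from its two halves. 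Finally, $K$-linearity of $\Phi$ (and $\Psi$) on Hom-sets is clear: restriction of a tuple of linear maps to a subset of indices is $K$-linear, and addition and scalar multiplication of morphisms are computed componentwise in both categories; compatibility with composition is likewise componentwise.

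The only real content, and the step I would be most careful about, is the correspondence between the two flavors of "commutativity": in $\rep{\CL_n(\tau)}$ the binding relations live on the squares of the ladder, whereas in $\arr{\rep{\A_n(\tau)}}$ one has commutativity of (a) the horizontal squares internal to the definition of "morphism in $\rep{\A_n(\tau)}$" applied to $V$, $W$, and to the components $F_V$, $F_W$, and (b) the big square relating $\phi_1$, $\phi_2$, $F_V$, $F_W$. I would match these up arrow-by-arrow: each ladder square of $L_n(\tau)$ sits over one horizontal arrow $i \leftrightarrow i+1$ and the two vertical arrows $i\to i'$, $(i+1)\to(i+1)'$, and the commutative relation on that square is exactly the naturality equation $\phi_{i+1}\circ M_{\alpha} = M_{\alpha'}\circ\phi_i$ (with the composites reversed if $\tau_i=b$) saying the vertical maps form a morphism $M_{\mathrm{bot}}\to M_{\mathrm{top}}$; similarly each commutativity condition on a morphism $f$ of $\CL_n(\tau)$ at a horizontal arrow becomes naturality of $f_{\mathrm{bot}}$ or $f_{\mathrm{top}}$, and at a vertical arrow becomes the big-square condition. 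Once this dictionary is written out for both orientations $f$ and $b$, the isomorphism is established. One subtlety worth a remark is that $\arr{\rep{\A_n(\tau)}}$ is not literally $\rep$ of the ladder quiver \emph{without} relations — the commutativity of the squares is built into the arrow-category morphisms and into the requirement that $\phi$ be a quiver-representation morphism, which is exactly why the target is the \emph{commutative} ladder.
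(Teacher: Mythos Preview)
Your proposal is correct and follows essentially the same approach as the paper: construct the functor by sending a persistence module to the morphism from its bottom row to its top row, and similarly for morphisms. The paper's proof is in fact only a two-sentence sketch of exactly this construction, so your version is a more fleshed-out account of the same argument.
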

\begin{proof}
  An isomorphism functor
  $F \colon \rep{\CL_n(\tau)} \rightarrow \arr{\rep{\A_n(\tau)}}$
  can be constructed by taking a persistence module
  $M \in \rep{\CL_n(\tau)}$ to the morphism defined by $M$ from its
  bottom row to its top row. Similarly, a morphism between two
  persistence modules $\lambda: M\rightarrow N$ defines a morphism
  $F(\lambda)$ between the corresponding arrows $F(M)$, $F(N)$ in
  the obvious way.

\qed
\end{proof}

The isomorphism $F \colon \rep{\CL_n(\tau)} \rightarrow \arr{\rep{\A_n(\tau)}}$
constructed above allows us to identify a persistence module $M$ on $\CL_n(\tau)$
with the corresponding arrow $F(M)$.

\subsection{Arrows to a Matrix Formalism}
\label{subsec:matrixformalism}

Fix an orientation $\tau$. For ease of notation, we
define the following.
\begin{definition}
  The relation $\reltoeq$ is defined on the set of interval
  representations of $\A_n(\tau)$, $\{\intv[b,d] : 1\leq b \leq d \leq n\}$,
  by setting $\intv[a,b] \reltoeq \intv[c,d]$ if
  and only if $\Hom(\intv[a,b],\intv[c,d])$ is nonzero.
\end{definition}

It can be checked that $\reltoeq$ is reflexive and
antisymmetric\footnote{$\intv[a,b] \reltoeq \intv[c,d]$ and
  $\intv[c,d] \reltoeq \intv[a,b]$ imply $\intv[a,b] = \intv[c,d]$},
but in general is not transitive. While we use the same symbols
$\intv[b,d]$ for the intervals of any $\A_n(\tau)$, note that these
intervals and thus $\reltoeq$ depend on the underlying orientation
$\tau$. We write
$\intv[a,b] \relto \intv[c,d]$ if $\intv[a,b] \reltoeq \intv[c,d]$
and $\intv[a,b] \neq\intv[c,d]$.

\begin{lemma}
  Let $\intv[a,b],\intv[c,d]$ be interval representations of $\A_n(\tau)$.
  \label{lemma:homdim}
  \begin{enumerate}
  \item The dimension of $\Hom(\intv[a, b], \intv[c, d])$ as a $K$-vector space is
    either $0$ or $1$.
  \item A $K$-vector space basis $\left\{f\itoi{a,b,c,d}\right\}$ can be chosen
    for each nonzero $\Hom(\intv[a, b], \intv[c, d])$ such that if $\intv[a,b] \reltoeq
    \intv[c,d]$, $\intv[c,d]\reltoeq \intv[e,f]$ \emph{and}
    $\intv[a,b] \reltoeq \intv[e,f]$, then
    \begin{equation}
      f\itoi{a,b,e,f} = f\itoi{c,d,e,f} f\itoi{a,b,c,d}.
    \label{basiscondition}
    \end{equation}
  \end{enumerate}
\end{lemma}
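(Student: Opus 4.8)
The overall plan is to reduce both parts to elementary linear algebra, exploiting the fact that every interval representation $\intv[b,d]$ of $\A_n(\tau)$ is \emph{thin}: it has $K$ or $0$ at each vertex, and all of its structure maps supported inside $\{b,\dots,d\}$ are identities, regardless of the orientation $\tau$.

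For part (1), I would take an arbitrary $f \in \Hom(\intv[a,b],\intv[c,d])$ and observe that its component $f_i \colon \intv[a,b]_i \to \intv[c,d]_i$ is forced to be the zero map unless $i$ lies in the overlap $S := \{a,\dots,b\}\cap\{c,\dots,d\}$, while for $i \in S$ it is multiplication by a scalar $\lambda_i \in K$. Since $S$ is a contiguous block of integers, any consecutive $i, i+1 \in S$ are joined by an arrow of $\A_n(\tau)$, along which both $\intv[a,b]$ and $\intv[c,d]$ act by the identity; commutativity of the square defining $f$ at that arrow then gives $\lambda_i = \lambda_{i+1}$. Hence all the $\lambda_i$ agree, $f$ is determined by a single scalar, and $\dim_K \Hom(\intv[a,b],\intv[c,d]) \leq 1$, which is exactly the assertion of part (1). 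The same description shows that when this Hom space is nonzero it is spanned by the morphism whose components are the identity on $S$ and the zero map elsewhere; I would \emph{define} $f\itoi{a,b,c,d}$ to be this morphism, which already pins down the basis required in part (2). I also record the consequence that $\Hom(\intv[a,b],\intv[c,d]) \neq 0$ forces $S \neq \emptyset$.

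For part (2), assume $\intv[a,b]\reltoeq\intv[c,d]$, $\intv[c,d]\reltoeq\intv[e,f]$ and $\intv[a,b]\reltoeq\intv[e,f]$. By the last remark, the intervals $\{a,\dots,b\}$, $\{c,\dots,d\}$, $\{e,\dots,f\}$ have pairwise nonempty intersections, so by the one-dimensional Helly property (intervals of integers that pairwise meet have a common point) their triple intersection $T$ is nonempty. Computing componentwise with the explicit description of the basis morphisms from part (1), the composite $f\itoi{c,d,e,f}\, f\itoi{a,b,c,d}$ has $i$-component equal to the identity for $i \in T$ and the zero map otherwise. Since $T \neq \emptyset$ this composite is a nonzero element of $\Hom(\intv[a,b],\intv[e,f])$, which by part (1) is one-dimensional with basis $f\itoi{a,b,e,f}$; hence the composite equals $\mu\, f\itoi{a,b,e,f}$ for some nonzero scalar $\mu$, and evaluating the $i$-component at any $i \in T$ forces $\mu = 1$. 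This yields $f\itoi{a,b,e,f} = f\itoi{c,d,e,f}\, f\itoi{a,b,c,d}$, i.e. Eq.~\eqref{basiscondition}.

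The only delicate point is making the slogan ``a morphism of interval representations is a single scalar on the common support'' precise, i.e. verifying commutativity arrow by arrow; once that is in place, the heart of part (2) is just the one-dimensional Helly theorem, and in particular no case analysis on the orientation $\tau$ is needed. I therefore expect no genuine obstacle beyond this bookkeeping.
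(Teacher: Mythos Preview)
Your proof is correct and follows essentially the same approach as the paper: both arguments show that a morphism between intervals is determined by a single scalar on the overlap (giving $\dim \le 1$) and take $f\itoi{a,b,c,d}$ to be the morphism that is the identity on that overlap and zero elsewhere. Your appeal to the one-dimensional Helly property to ensure the triple overlap is nonempty, together with evaluating the composite at a point of that overlap, makes explicit the verification of Eq.~\eqref{basiscondition} that the paper simply records as holding ``by construction.''
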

\begin{proof}
\leavevmode
\begin{enumerate}
\item Let us use the notation $[a,b] = \{a,a+1,\hdots, b\}$ to denote the
  interval of integers $i$ with $a\leq i \leq b$ and consider $g = \{g_i\}_{i=1}^n \in \Hom(\intv[a, b], \intv[c, d])$. Suppose that $\Hom(\intv[a, b], \intv[c, d])$ is nonzero.

  Note that $g_i = 0$ for $i \notin [a,b] \cap [c,d]$. It
  follows that if $[a,b] \cap [c,d] = \emptyset$, then $\Hom(\intv[a,
  b], \intv[c, d]) = \{0\}$, a contradiction. Therefore $[a,b] \cap
  [c,d] \neq \emptyset$.

  Fix an index $j  \in [a,b] \cap [c,d] \neq \emptyset$.
  We claim that $g_i = g_j$ for any $i \in
  [a,b] \cap [c,d]$, by the commutativity requirement on morphisms. To
  see this, suppose that $i = j+1$ with $i \in [a,b] \cap [c,d]$. Then
  $g_i = g_j$ follows from the commutativity of
  \[
    \begin{tikzcd}
      K \rar[rightarrow, "\Id_K"] & K \\
      K \rar[rightarrow, "\Id_K"] \uar[rightarrow, "g_{j}"] &
      K  \uar[rightarrow, swap, "g_i"]
    \end{tikzcd}
    \;\text{ or }\;
    \begin{tikzcd}
      K \rar[leftarrow, "\Id_K"] & K \\
      K \rar[leftarrow, "\Id_K"] \uar[rightarrow, "g_{j}"] &
      K  \uar[rightarrow, swap, "g_i"]
    \end{tikzcd}
  \]
  for $\tau^i = f$ or $\tau^i = b$, respectively.
  A similar argument shows that the above claim holds
  for $i = j-1$ with $i \in [a,b] \cap [c,d]$. Repeating this
  argument, we get that $g_i = g_j$ as long as $i \in [a,b] \cap
  [c,d]$.

  Thus, any morphism $g$ is uniquely determined by its value $g_j$ for
  some $j \in [a,b]\cap [c,d]$. This provides an isomorphism of
  $K$-vector spaces
  \[
    \Hom(\intv[a,b],\intv[c,d]) \cong \Hom_K(K,K)
  \]
  by taking $g$ to $g_j$. Since $\Hom_K(K,K) \cong K$, we conclude
  that if $\Hom(\intv[a, b], \intv[c, d])$ is nonzero, then its
  dimension is $1$.

\item For all pairs of intervals with $\intv[a,b] \reltoeq
  \intv[c,d]$ define $f\itoi{a,b,c,d}$ by
 \begin{equation*}
 	\left(f\itoi{a,b,c,d}\right)_i =
 	\begin{cases}
 		\Id_K & \text{if } i \in [a,b] \cap [c,d], \\
 		0 & \text{otherwise}.
 	\end{cases}
 \end{equation*}
 The above discussion shows that $f\itoi{a,b,c,d}$ is in
 $\Hom(\intv[a, b], \intv[c, d])$, and any $g \in \Hom(\intv[a, b],
 \intv[c, d])$ can be written as $g = g_j(1) f\itoi{a,b,c,d}$ for any
 $j \in [a,b]\cap[c,d]$. Moreover, this choice of $f\itoi{a,b,c,d}$ satisfies
 Eq.~\eqref{basiscondition} by construction.
\end{enumerate}
\qed
\end{proof}

\begin{example}
With orientation $\tau =ff\cdots f$, the homomorphism spaces are
\[
  \Hom(\intv[a,b],\intv[c,d])=
  \begin{cases}
    K f\itoi{a,b,c,d}, & c \leq a \leq d \leq b, \\
    0, & \text{otherwise}.
  \end{cases}
\]
The basis functions $f\itoi{a,b,c,d}$ are given by
\[
  \left(f\itoi{a,b,c,d}\right)_{i} =
    \begin{cases}
      \Id_K, & a \leq i \leq d, \\
      0, & \text{otherwise.}
    \end{cases}
\]
With $n = 2$, $\intv[2,2] \reltoeq
\intv[1,2]$ and $\intv[1,2] \reltoeq \intv[1,1]$ but $\intv[2,2]
\not\reltoeq \intv[1,1]$. This also provides an example to illustrate
that $\reltoeq$ may not be transitive.
\end{example}

Now, let $M$ be a representation of $\CL_n(\tau)$ with $n\leq 4$. By the isomorphism
$F:\rep{\CL_n(\tau)} \cong \arr{\rep{\A_n(\tau)}}$
in Theorem~\ref{isothm}, we identify $M$ with its corresponding arrow
$F(M) \colon V \rightarrow W$
in $\arr{\rep{\A_n(\tau)}}$. Note that
$V$ is in $\rep{\A_n(\tau)}$ and thus can be decomposed as
\begin{equation}
  \label{eq:decompose_cl_v}
	\eta_V \colon V \cong
  \underset{1 \leq a \leq b \leq n}{\bigoplus} \intv[a, b]^{m_{a, b}}, \;\;
  (m_{a, b} \in \mathbb{Z}_{\geq0}\text{: multiplicity})
\end{equation}
as in Eq.~(\ref{eq:decompo_an}). A similar isomorphism $\eta_W$ can be
obtained for $W$. Through these isomorphisms, define
\begin{equation}
  \label{eq:decompose_cl_rows}
	\Phi = \eta_W F(M) \eta_V^{-1} \colon
  \underset{1 \leq a \leq b \leq n}{\bigoplus} \intv[a, b]^{m_{a, b}}
  \rightarrow
  \underset{1 \leq c \leq d \leq n}{\bigoplus} \intv[c, d]^{m'_{c, d}}.
\end{equation}
In fact, $(\eta_V,\eta_W) : F(M) \rightarrow \Phi$ is an isomorphism
in $\arr{\rep{\A_n(\tau)}}$.

Moreover, $\Phi$ can be written in a block matrix form
\begin{equation}
  \label{eq:blockmatrix}
	\Phi = \mat{\Phi\itoi{a,b,c,d}}
\end{equation}
where each block matrix entry
$\Phi\itoi{a,b,c,d} \colon \intv[a, b]^{m_{a, b}} \rightarrow \intv[c, d]^{m'_{c, d}}$
is obtained from $\Phi$ by the appropriate inclusion and projection.
That is, $\Phi\itoi{a,b,c,d}$ is the composition of
\begin{equation}
  \label{eq:block_entries_defn}
  \begin{tikzcd}
    \intv[a, b]^{m_{a, b}} \rar{\iota} &
    \underset{1 \leq a \leq b \leq n}{\bigoplus} \intv[a, b]^{m_{a, b}}  \rar{\Phi} &
    \underset{1 \leq c \leq d \leq n}{\bigoplus} \intv[c, d]^{m'_{c, d}} \rar{\pi} &
    \intv[c, d]^{m'_{c, d}}.
  \end{tikzcd}
\end{equation}
In a similar manner, each block $\Phi\itoi{a,b,c,d}$ can be
further expressed as a matrix of homomorphisms
\begin{equation*}
	\Phi\itoi{a,b,c,d} = [g_{i, j}], \;\;
	(1 \leq i \leq m_{a, b}, 1 \leq j \leq m'_{c, d}).
\end{equation*}
where each $g_{i,j} \in \Hom(\intv[a, b], \intv[c, d])$.

For intervals $\intv[a,b] \reltoeq \intv[c,d]$, part two of
Lemma~\ref{lemma:homdim} shows that for each $i$, $j$ we can write
$g_{i, j} = \mu_{i, j} f\itoi{a,b,c,d}$ for some $\mu_{i,j} \in K$.
Factoring out $f\itoi{a,b,c,d}$ from $\Phi\itoi{a,b,c,d}$ with $\intv[a,b] \reltoeq \intv[c,d]$, we get
\begin{equation*}
  \Phi\itoi{a,b,c,d} =
  \begin{cases}
    M\itoi{a,b,c,d} f\itoi{a,b,c,d} & \text{if } \intv[a,b] \reltoeq \intv[c,d], \\
    0 & \text{otherwise},
  \end{cases}
\end{equation*}
where each $M\itoi{a,b,c,d}$ is an $m_{c,d}' \times m_{a,b}$ matrix with entries in $K$.
To summarize, we define the following.
\begin{definition}
  Let $M$ be a persistence module on $\CL_n(\tau)$. The block matrix form $\Phi(M)$  of $M$ is
\begin{equation}
\label{eq:blockmatrix_factored}
	\Phi(M) = \mat{\Phi\itoi{a,b,c,d}} = \mat{M\itoi{a,b,c,d} f\itoi{a,b,c,d}}_{\intv[a,b] \reltoeq \intv[c,d]}.
\end{equation}
where each $\Phi\itoi{a,b,c,d}$ is as defined in Eq.~\eqref{eq:block_entries_defn}.
\end{definition}
In the matrix formalism, we label the rows and columns of the block
matrix corresponding to the summand $\intv[a,b]^{m_{ab}}$ by
$\Int{a,b}$. We say that the block $\Phi\itoi{a,b,c,d}$ is in
row $\Int{c,d}$ and column $\Int{a,b}$.

\subsection{Permissible operations}
\label{subsec:permissibles}
While we have written $\Phi$ in a block
matrix form, not all of the usual row and column operations on $K$-matrices
correspond to a meaningful change of basis.
The fact that there exist some pairs of intervals where
$\Hom(\intv[a, b], \intv[c, d])$ is zero leads to some complications.

If $(R,S): \Phi' \cong \Phi$ is an isomorphism, then
\[
\begin{tikzcd}
\bigoplus\limits_{1\leq a \leq b \leq n} \intv[a,b]^{m_{a,b}} \rar{\Phi}  &
 \bigoplus\limits_{1\leq a \leq b \leq n} \intv[a,b]^{m'_{a,b}} \\
\bigoplus\limits_{1\leq a \leq b \leq n} \intv[a,b]^{m_{a,b}} \rar{\Phi'} \uar{R} &
 \bigoplus\limits_{1\leq a \leq b \leq n} \intv[a,b]^{m'_{a,b}} \uar{S}
\end{tikzcd}
\]
commutes and $\Phi' = S^{-1} \Phi R$.
Observing that the domain and codomain of $R$ are direct summations,
$R$ can be written in a matrix form $R = \left[R\itoi{a,b,c,d} f\itoi{a,b,c,d}\right]_{\intv[a,b] \reltoeq \intv[c,d]}$ relative to them, by an argument similar to that done for $\Phi$. Similarly, $S$ can be written in a matrix form.
It can be checked that $(R,S):\Phi' \rightarrow \Phi$ is an isomorphism if and only if all the diagonal entries $R\itoi{a,b,a,b}$ and $S\itoi{a,b,a,b}$ are invertible.

Let us discuss column
operations and assume $S$ is the identity.
In analogy to usual linear algebra, column operations on $\Phi$ correspond to
a change of interval summands induced by $R$. To see this, let us
choose a column $\Int{a,b}$ and suppose that $\intv[a,b] \reltoeq
\intv[c,d]$.

The block entry at row $\Int{c,d}$ in column $\Int{a,b}$ of $\Phi' =
\Phi R$ is
\[
  \begin{array}{rcl}
    [\Phi R]\itoi{a,b,c,d}
                  & = & \displaystyle\sum\limits_{\intv[a,b] \reltoeq \intv[e,f] \reltoeq \intv[c,d]}
                        (M\itoi{e,f,c,d} f\itoi{e,f,c,d})
                        (R\itoi{a,b,e,f} f\itoi{a,b,e,f})
    \\
                  & = & \left(\displaystyle\sum_{\intv[a,b] \reltoeq \intv[e,f] \reltoeq \intv[c,d]}
                        M\itoi{e,f,c,d} R\itoi{a,b,e,f} \right)f\itoi{a,b,c,d}
  \end{array}
\]
where $\Phi R$ is computed as a usual multiplication of block
matrices. In the last step, we used the property that $f\itoi{e,f,c,d}
f\itoi{a,b,e,f} = f\itoi{a,b,c,d}$ as guaranteed by
Lemma~\ref{lemma:homdim}. Note that the resulting coefficient of
$f\itoi{a,b,c,d}$ above involves only addition and multiplication of
$K$-matrices. Furthermore, since $\intv[a,b] \reltoeq \intv[a,b]$, it is equal to
\[\displaystyle\sum_{\intv[a,b] \reltoeq \intv[e,f] \reltoeq \intv[c,d]}
  M\itoi{e,f,c,d} R\itoi{a,b,e,f}
  =
  \left(M\itoi{a,b,c,d} R\itoi{a,b,a,b} +
    \displaystyle\sum_{\intv[a,b] \relto \intv[e,f] \reltoeq \intv[c,d]}
    M\itoi{e,f,c,d} R\itoi{a,b,e,f} \right).
\]
In this form, we see that apart from a change of basis
$R\itoi{a,b,a,b}$ within the column $\Int{a,b}$, we also permit
addition of multiples of columns $\Int{e,f}$ with $\intv[a,b] \relto
\intv[e,f]$. A similar analysis can be performed for row operations.

For example, let us consider a persistence module $M$ on $\CL_2(f)$ corresponding to
\begin{equation}
\Phi=
\begin{tikzpicture}[baseline=(p.center)]
  \matrix[matrix of math nodes, row sep=0em, column sep=0em,
  ampersand replacement = \&,left delimiter=[,right delimiter={]}](p){
    M\itoi{2,2,2,2}\Id_{\intv[2,2]}
    \& 0
    \& 0
    \\
    M\itoi{2,2,1,2}f\itoi{2,2,1,2}
    \& M\itoi{1,2,1,2} \Id_{\intv[1,2]}
    \&  0
    \\
    0
    \& M\itoi{1,2,1,1}f\itoi{1,2,1,1}
    \& M\itoi{1,1,1,1} \Id_{\intv[1,1]}
    \\
  };
  \node[anchor=south east, left=6pt] (p-0-0) at (p-1-1.north west) {};

  \foreach[count=\i] \v in {\(\Int{2,2}\),\(\Int{1,2}\), \(\Int{1,1}\)}{
    \node (p-\i-0) at (p-0-0 |- p-\i-\i) {};
    \path (p-\i-0.north) -- (p-\i-0.south) node [midway, left] { \v };
  }
  \foreach[count=\i] \v in {\(\Int{2,2}\),\(\Int{1,2}\),\(\Int{1,1}\)}{
    \node (p-0-\i) at (p-\i-\i |- p-0-0) {};
    \path (p-0-\i.west) -- (p-0-\i.east) node [midway, above] { \v };
  }
\end{tikzpicture}.
\label{eq:generall2l2}
\end{equation}
Since $\Int{1,1} \not\reltoeq
\Int{1,2}$, $\Int{1,1} \not\reltoeq \Int{2,2}$, $\Int{1,2}
\not\reltoeq \Int{2,2}$, and $\Int{2,2} \not\reltoeq \Int{1,1}$, the blocks in the corresponding positions are
zero. An automorphism $R$ on $V = \intv[2,2]^{m_{2,2}} \oplus \intv[1,2]^{m_{1,2}} \oplus \intv[1,1]^{m_{1,1}}$ as defined above can be written as
\[
R =
\begin{tikzpicture}[baseline=(p.center)]
  \matrix[matrix of math nodes, row sep=0em, column sep=0em,
  ampersand replacement = \&,left delimiter=[,right delimiter={]}](p){
    R\itoi{2,2,2,2}\Id_{\intv[2,2]}
    \& 0
    \& 0
    \\
    R\itoi{2,2,1,2} f\itoi{2,2,1,2}
    \& R\itoi{1,2,1,2} \Id_{\intv[1,2]}
    \&  0
    \\
    0
    \& R\itoi{1,2,1,1} f\itoi{1,2,1,1}
    \& R\itoi{1,1,1,1} \Id_{\intv[1,1]} \\
  };
  \node[anchor=south east, left=6pt] (p-0-0) at (p-1-1.north west) {};

  \foreach[count=\i] \v in {\(\Int{2,2}\),\(\Int{1,2}\), \(\Int{1,1}\)}{
    \node (p-\i-0) at (p-0-0 |- p-\i-\i) {};
    \path (p-\i-0.north) -- (p-\i-0.south) node [midway, left] { \v };
  }
  \foreach[count=\i] \v in {\(\Int{2,2}\),\(\Int{1,2}\),\(\Int{1,1}\)}{
    \node (p-0-\i) at (p-\i-\i |- p-0-0) {};
    \path (p-0-\i.west) -- (p-0-\i.east) node [midway, above] { \v };
  }
\end{tikzpicture}.
\]
Thus, $\Phi R$ is
\[
\begin{tikzpicture}[baseline=(p.center)]
  \matrix[matrix of math nodes, row sep=0em, column sep=0em,
  ampersand replacement = \&,left delimiter=[,right delimiter={]}](p){
    M\itoi{2,2,2,2} R\itoi{2,2,2,2}\Id_{\intv[2,2]}
  \& 0
  \& 0
  \\
  (M\itoi{2,2,1,2} R\itoi{2,2,2,2} + M\itoi{1,2,1,2} R\itoi{2,2,1,2}) f\itoi{2,2,1,2}
  \&  M\itoi{1,2,1,2} R\itoi{1,2,1,2}\Id_{\intv[1,2]}
  \& 0
  \\
  (M\itoi{1,2,1,1} R\itoi{2,2,1,2}) (f\itoi{1,2,1,1} f\itoi{2,2,1,2})
  \& (M\itoi{1,2,1,1} R\itoi{1,2,1,2} + M\itoi{1,1,1,1} R\itoi{1,2,1,1}) f\itoi{1,2,1,1}
  \&  M\itoi{1,1,1,1} R\itoi{1,1,1,1}\Id_{\intv[1,1]}
  \\
  };
\end{tikzpicture}.
\]
Since $f\itoi{1,2,1,1} f\itoi{2,2,1,2} = 0$, the lower left corner is still a zero block, as can be expected.


\section{Algorithm}
\label{sec:algorithms}

\subsection{Input and Notation}
\label{subsec:algo_notations}
Let $n\leq 4$, $\tau$ be an orientation, and $M$ be a persistence module on $\CL_n(\tau)$. In the previous section, we constructed the block matrix $\Phi(M) = \mat{\Phi\itoi{a,b,c,d}} = \mat{M\itoi{a,b,c,d}f\itoi{a,b,c,d}}$ associated to $M$.
Two blocks at distinct entries are said to be column (row) neighbors if
they are in the same column (row). Neighbors do not have
to be directly adjacent to each other in the block matrix.
By an abuse of notation, we let $(\Int{c,d},\Int{a,b})$
referring to row $\Int{c,d}$ and column $\Int{a,b}$ also refer
to the block (submatrix) located at that entry.

Recall that the vertices of the \AR quiver $\Gamma(\A_n(\tau))$ of $\A_n(\tau)$ are in bijective correspondence to the interval representations.
Hence, the quiver structure of $\Gamma(A_n(\tau))$ naturally induces a partial order on
the set of intervals, by going from source vertices to sink vertices. We fix a total order $\prec$ extending this by resolving ambiguities using reverse lexicographic order on the pairs $(b,d)$. For example, since $\A_n(fb)$ has \AR quiver
\[
\begin{tikzpicture}[baseline=(A.center)]
    \matrix (A) [matrix of math nodes, column sep= 4 mm, row sep = 2
    mm, nodes in empty cells]
    {
                &  \Int{1,2} &           & \Int{3,3}    \\
      \Int{2,2} &            & \Int{1,3} &  \\
                &  \Int{2,3} &           & \Int{1,1}   \\
    };

    \path (A-2-3)
    edge [->] (A-1-4)
    edge [->] (A-3-4)
    edge [<-] (A-1-2)
    edge [<-] (A-3-2);

    \path [->] (A-2-1) edge (A-1-2) edge (A-3-2);
  \end{tikzpicture}
\]
we get the order $\Int{2,2} \prec \Int{2,3} \prec \Int{1,2} \prec \Int{1,3} \prec \Int{3,3} \prec \Int{1,1}$. Here, the ambiguities are resolved as $\Int{2,3} \prec \Int{1,2}$, and $\Int{3,3} \prec \Int{1,1}$. We shall use $\prec$ to order the columns and rows of the block matrix.

Finally, we define the data that serves as input to Algorithm~\ref{algo:main}.
\begin{definition}
  \label{defn:algo_input}
  Let $M$ be a persistence module on $\CL_n(\tau)$. The \emph{block matrix
  problem} of $M$ is the block matrix $\Phi(M)$, together with permissible operations, and with rows and columns ordered as below.
  \begin{enumerate}
  \item If $\intv[a,b] \reltoeq \intv[c,d]$ then operations from
    row $\Int{a,b}$ to row $\Int{c,d}$ are permissible.
  \item If $\intv[a,b] \reltoeq \intv[c,d]$ then operations to
    column $\Int{a,b}$ from column $\Int{c,d}$ are permissible.
  \end{enumerate}
  The columns from left to right are ordered in increasing $\prec$ while rows from top to bottom are in decreasing $\prec$.
\end{definition}

Note that the permissible operations are the rules derived in Subsection~\ref{subsec:permissibles} and that applying these operations result in a block matrix isomorphic to $\Phi(M)$. For convenience, we distinguish the permissible operations that operate only within a fixed row or column block. An \emph{inner row (column) operation} is a
row (column) operation that only affects
$K$-vector rows (columns) within some fixed row (column)
$\Int{a,b}$.

Recall that if $\intv[a,b] \not\reltoeq \intv[c,d]$, then the block
$(\Int{c,d}, \Int{a,b})$ is always zero, even after the application of
any permissible operations. To distinguish them from other blocks that
just happen to be numerically zero, we denote them by $\n$ and call these blocks
\emph{strongly zero blocks}.

Otherwise, for $\intv[a,b] \reltoeq \intv[c,d]$, we use the symbol
$*$ to abbreviate the block $M_{\Int{a, b}}^{\Int{c, d}}$ at $(\Int{c,d}, \Int{a,b})$. This indicates that these blocks are so far \emph{unprocessed}. As we operate on the block
matrix, their status as unprocessed will be changed to either an
identity matrix $E$ or a zero matrix $0$.

\begin{notation}
  To denote the possible block statuses, we use:
  \begin{itemize}
  \item $*$ for unprocessed blocks,
  \item $\n$ for strongly zero blocks,
  \item $E$ for identity blocks (of appropriate sizes),
    and
  \item $0$ for zero blocks (of appropriate sizes).
  \end{itemize}    
  The blocks marked as $\n$, $E$, and $0$ are considered processed.
\end{notation}

Note that the block matrix may have numerically identity or zero blocks, even though we label their status as being unprocessed $*$. This status only reflects the fact that they have not yet been examined and fixed through the course of the algorithm.

\begin{example}
  \label{example:clfb_mp}
  The block matrix problem corresponding to a persistence module on $\CL_3(fb)$ has the form
  \begin{equation}
    \mattikz{
      \matheader{
        \n \& \n \& \* \& \* \& \n \& \* \\
        \n \& \* \& \n \& \* \& \* \& \n \\
        \* \& \* \& \* \& \* \& \n \& \n \\
        \* \& \n \& \* \& \n \& \n \& \n \\
        \* \& \* \& \n \& \n \& \n \& \n \\
        \* \& \n \& \n \& \n \& \n \& \n \\
      };
      \node[anchor = south east] (p-0-0) at (p-2-1.west |- p-1-1.north west) {};
      \foreach[count = \i] \v in {
        \(\Int{1, 1}\), \(\Int{3, 3}\),
        \(\Int{1, 3}\), \(\Int{1, 2}\),
        \(\Int{2, 3}\), \(\Int{2, 2}\)}
      {
        \node[left=3.5ex] at (p-\i-1) {\scriptsize\v};
      }
      \foreach[count = \i] \v in {
        \(\Int{2, 2}\), \(\Int{2, 3}\),
        \(\Int{1, 2}\), \(\Int{1, 3}\),
        \(\Int{3, 3}\), \(\Int{1, 1}\)}
      {
        \node[above=1.5ex] at (p-1-\i) {\scriptsize\v};
      }
    }.
  \end{equation}
\end{example}


\subsection{Algorithm}
\label{subsec:pseudocode}
Given a persistence module $M$ on $\CL_n(\tau)$ where $n\leq 4$, the
input to Algorithm~\ref{algo:main} is the block matrix problem of $M$.
Below, we shall also use the notation $M$ to denote the block matrix
problem associated to the persistence module $M$.

The algorithm uses the following two facts.
Given a usual $K$-matrix $N$, there exist invertible matrices $R$ and
$S$ (of appropriate sizes) such that $RNS = \left[\SNF\right]$, a
Smith normal form. Thus, by using appropriate inner row and column operations,
a block $*$ can be transformed into the form $\left[\SNF\right]$.
Meanwhile, using some identity submatrix $E$, a row (column) neighbor
$*$ can be zeroed out using appropriate permissible column (row) operations. Complications come from the side effects of these operations.

\begin{algorithm}[H]
	\caption{Main Algorithm. Input: a block matrix problem $M$}
  \label{algo:main}
	\begin{algorithmic}[1]
		\Procedure{matrix\_reduction}{$M$}
			\While{$M$ has unprocessed submatrices}
				\State $v_* \gets$ the bottommost $*$ block of the rightmost column with $*$ blocks in $M$ \label{line:main_choice}
				\State $F_R \gets$ \textproc{row\_side\_effect}($v_*$)
				\State $F_C \gets$ \textproc{col\_side\_effect}($v_*$)
				\State Transform $v_*$ to Smith normal form by inner operations on $M$. \label{line:main_snf}
        \ForDo{$v' \in F_R$}{\textproc{col\_fix}($v'$)} \label{line:main_colfix}
				\ForDo{$v' \in F_C$}{\textproc{row\_fix}($v'$)} \label{line:main_rowfix}
				\State Update the partitioning of blocks in block matrix $M$. \label{line:main_part}
        \While{there exist nonzero blocks $v_t$ with $(p \gets \textproc{erasable}(v_t))$ not null} \label{line:main_erasable}
          \State Zero out $v_t$ via the procedure indicated by $p$.
        \EndWhile
      \EndWhile
		\EndProcedure
	\end{algorithmic}
\end{algorithm}

The main while loop of Algorithm~\ref{algo:main} can be
divided broadly into four main parts.
\begin{enumerate}
\item Transform one appropriate block $v_*$ into a Smith normal form
  (line \ref{line:main_snf}) by inner row and column operations on $M$.
\item The operations performed in the previous part may affect the
  forms of neighboring identity blocks. We transform them back to 
  identity blocks (lines \ref{line:main_colfix} and \ref{line:main_rowfix}).
\item Update the partitioning of the blocks (line \ref{line:main_part}). After obtaining
  the Smith normal form $\left[\SNF\right]$, we split up columns and
  rows so that each identity matrix $E$ is its own block in $M$.
\item Greedily zero out erasable blocks $v$ by addition of multiples of
  identity blocks.
\end{enumerate}

We first illustrate parts one to three by an example. Suppose that
\[
  M = 
  \mattikz{
    \matheader{
      * \& * \& v_* \\
      * \& \n \& E  \\     
    };
    \node[right=6pt] (LR) at (p-1-3.east |- p-2-3.south east) {};
    \rowarrow{[<->](LR |- p-2-3.east)}{(LR |- p-1-3.east)};
  }
\]
where operations from row $1$ to row $2$ and vice versa are impermissible.
We get:
\[
  M = 
  \mattikz{
    \matrix[matrix of math nodes, column sep=0mm, row sep=0mm,
    inner sep = 0mm, left delimiter = {[},right delimiter = {]},
    every node/.append style={
      anchor=center,text depth = 0.5em,text
      height=1em,minimum width=1.5em}](p){
      * \& * \& v_* \\
      * \& \n \& E  \\
    };
  }
  \overset{1.}{\cong}
  \mattikz{
    \matrix[matrix of math nodes, column sep=0mm, row sep=0mm,
    inner sep = 0mm, left delimiter = {[},right delimiter = {]},
    every node/.append style={
      anchor=center,text depth = 0.5em,text
      height=1em,minimum width=1.5em}](p){
      * \& * \& \SNF \\
      * \& \n \& S  \\
    };
  }
  \overset{2.}{\cong}
  \mattikz{
    \matrix[matrix of math nodes, column sep=0mm, row sep=0mm,
    inner sep = 0mm, left delimiter = {[},right delimiter = {]},
    every node/.append style={
      anchor=center,text depth = 0.5em,text
      height=1em,minimum width=1.5em}](p){
      * \& * \& \SNF \\
      * \& \n \& E  \\
    };
  },
\]
where the numbers above the isomorphisms indicate the procedures being
performed.

In the first part, the block $v_*$ is chosen by the heuristic given in
Algorithm~\ref{algo:main}, line \ref{line:main_choice}. Note that $v_*$ is therefore
dependent on the ordering of the rows and columns, which we have fixed
in Definition~\ref{defn:algo_input}.
By inner operations on $M$, the block $v_*$ is transformed to Smith normal
form. In particular, there are invertible matrices $R$ and $S$ such
that $Rv_*S = \left[ \SNF \right]$.

\begin{algorithm}
\caption{}
	\begin{algorithmic}[1]
		\Function{col\_side\_effect}{$v$}
    \State \Return $\set{v' | v' \text{ is an identity column neighbor of } v}$
		\EndFunction
	\end{algorithmic}
  \begin{algorithmic}[1]
		\Function{row\_side\_effect}{$v$}
			\State \Return $\set{v' | v' \text{ is an identity row neighbor of } v}$
		\EndFunction
	\end{algorithmic}
	\label{algo:sideeffects}
\end{algorithm}

Next, the block $E$ below $v_*$ becomes $E S = S$, possibly
not an identity matrix. This is recorded as a side effect. Since $S$ is invertible, it can be transformed back by using only inner row operations in \textproc{row\_fix}.
In general there may be other
identity blocks in the same row as $S$ whose forms are affected by
these row operations. To fix them, we recursively call \textproc{row\_fix} and \textproc{col\_fix} in Algorithm~\ref{algo:colrowfix}. Checking that this does not lead to an infinite
recursion for the cases we consider is part of the proof of Theorem~\ref{main_theorem}.

\begin{algorithm}
	\caption{}
	\begin{algorithmic}[1]
		\Function{col\_fix}{$v$}
			\State $V' \gets \textproc{col\_side\_effect}(v)$
			\State Transform $v$ to an identity by inner column operations on $M$.
      \ForDo{$v' \in V'$}{$\textproc{row\_fix}(v')$}
		\EndFunction
	\end{algorithmic}
  \begin{algorithmic}[1]
		\Function{row\_fix}{$v$}
			\State $V' \gets \textproc{row\_side\_effect}(v)$
			\State Transform $v$ to an identity by inner row operations on $M$.
      \ForDo{$v' \in V'$}{$\textproc{col\_fix}(v')$}
		\EndFunction
	\end{algorithmic}
	\label{algo:colrowfix}
\end{algorithm}

Next is part three, where we update the block matrix partitioning to
isolate the identity blocks $E$. Both the row and column of $v_*$ are split into two. We get
\[
  M \cong \hdots \overset{3.}{=}
  \mattikz{
    \matheader{
      * \& * \& E \& 0 \\
      * \& * \& 0 \& 0 \\
      * \& \n \& E \& 0 \\
      * \& \n \& 0 \& E \\
    };
  }
\]
Since $v_*$ has a column neighbor $E$, the bottom row
also needs to be split to isolate the parts of the old identity block.

Finally, we discuss part four. A simple case for a target block $v_t$ to be erasable is when $v_t = (r,c)$ has a column neighbor identity block $v_E = (r',c)$ that has no nonzero row neighbors, and such that row operations from
row $r'$ to row $r$ are permissible. Using permissible row operations,
the block $v_t$ can be zeroed out by addition of a multiple of the identity block $v_E$. A similar statement holds if there exists a row neighbor identity block $v_E$ satisfying similar conditions.

The above cases present no side effects. In general, zeroing out the target block $v_t$ by addition of multiples of a
row (column) may change the forms of other processed blocks. We separate the cases of row and column erasability in Algorithm~{\ref{alg:erasable}}.

\begin{algorithm}
	\caption{}
	\begin{algorithmic}[1]
		\Function{erasable}{$v_t$, $v_f$ = null, $visited = \{ \}$}
			\If{\textproc{row\_erasable}($v_t$, $v_f$, $visited$) is not null}
				\State \Return \textproc{row\_erasable}($v_t$, $v_f$, $visited$)
        		\ElsIf{\textproc{col\_erasable}($v_t$, $v_f$, $visited$) is not null}
				\State \Return \textproc{col\_erasable}($v_t$, $v_f$, $visited$)
			\Else
				\State \Return null
			\EndIf
		\EndFunction
	\end{algorithmic}
	\label{alg:erasable}
\end{algorithm}

In zeroing out the target $v_t$, we avoid changing the forms of any previously obtained identity blocks. It is also possible that a zero block $v_t'$
may become nonzero as a side effect. The
algorithm ensures that if this happens, then $v_t'$ can and will
be transformed back to $0$ again. Iteratively, repairing these side
effects may introduce more side effects. Thus, we recursively call on
our check for erasability on each side effect. To avoid any infinite recursion, we keep track of the targets $v_t$ visited, and visit each block as a target at most once for each top-level call to \textproc{erasable}.

If the above conditions can be satisfied, the function \textproc{erasable} returns a finite directed tree, called the \emph{process tree}, that records the procedure to zero
out $v_t$. Each vertex in a process tree is labelled with a pair $(v_t,
v_E)$ of a target block and an identity block that can be used to zero
out $v_t$. The successor vertices $(v_t', v_E')$ of a vertex $(v_t,v_E)$ consist of all $v_t'$ that appear as side effects in the operation to zero out $v_t$ using $v_E$.

If no such procedure can be found, then \textproc{erasable} returns a null (empty) process tree.
This means that the block in question is declared as not being
erasable in the current step of the algorithm.

\begin{algorithm}
	\caption{Check whether or not $v_t$ is row erasable without using
    block $v_f$.}
	\begin{algorithmic}[1]
		\Function{row\_erasable}{$v_t$, $v_f$, ${visited}$}
    \State $V'$ = \textproc{col\_side\_effect}($v_t$) \label{line:sideeffect}
    \State $visited \gets visited \cup \set{v_t}$
    \ForAll{$v_E \in V'$ not in $visited$, $v_E \neq v_f$, and row operation from $v_E$ to $v_t$ permissible}
      \State $usable \gets$ true; $subtrees \gets \{\}$ 
      \ForAll{$u \in$ \textproc{nonzero\_row\_neighbors}($v_E$)}
	      \State $v_t' \gets$ the block in same row as $v_t$ and same column as $u$. \label{line:newtarget}
        \If{$v_t'$ is in $visited$ or $v_t' = E$}
          \State $usable \gets$ false; \textbf{break}
        \EndIf        
        \If{($v_t' = 0$ and $(p \gets \textproc{erasable}(v_t', u, visited)) = $ null)}\label{line:erasable_recurse}
          \State $usable \gets$ false; \textbf{break}
        \EndIf
        \State $subtrees \gets  subtrees \cup \{p\}$.
      \EndFor
      \If{$usable$}
        \State \Return process tree with root $(v_t, v_E)$ and arrows to the roots of $subtrees$.
        \EndIf
    \EndFor
    \State \Return null 
		\EndFunction
	\end{algorithmic}
	\label{algo:rowerasable}
\end{algorithm}

Let us discuss \textproc{row\_erasable} in Algorithm~\ref{algo:rowerasable} in detail.
In line~\ref{line:sideeffect}, we use the function
$\textproc{col\_side\_effect}(v_t)$  to get candidate identity
blocks $v_E$. We consider only unvisited blocks $v_E$ where
the row operation from $v_E$ to $v_t$ is permissible, and where $v_E$ is not the flagged block $v_f$.
Its purpose will become clear below.


Now, $\textproc{nonzero\_row\_neighbors}(v_E)$ is defined
to return the set of row neighbors $u$ of $v_E$ that are not zero nor
strongly zero. Each $u$ can potentially induce a side effect, which we check one by one.
To illustrate, consider the following
arrangement
\[
  \mattikz{
    \matheader{
      \& \vdots \& \& \vdots \&  \\
      \cdots \& u \& \cdots \& v_E \&  \cdots\\
      \& \vdots \& \& \vdots \&  \\
      \cdots \& v_t' \& \cdots \& v_t \&  \cdots\\
      \& \vdots \& \& \vdots \&  \\
    };
    \node[anchor = south east, left = 4pt] (p-0-0) at (p-2-1.west |- p-1-2.north west) {};

    \node (p-2-0) at (p-0-0 |- p-2-2) {};
    \node (p-4-0) at (p-0-0 |- p-4-4) {};
    \path (p-2-0.north) -- (p-2-0.south) node [midway, left] { $r_1$ };
    \path (p-4-0.north) -- (p-4-0.south) node [midway, left] { $r_2$ }; 
  }
\]
where $v_E$ is the identity block under consideration.
Here, $u$ is a nonzero row neighbor of $v_E$. Since we want to add multiples of row $r_1$ to
$r_2$ to zero out $v_t$, the block $v_t'$ in same row $r_2$ as $v_t$
and same column as $u$ (Line~\ref{line:newtarget}) may possibly have
its form affected.

The next few lines handle the checking of block $v_t'$. If the block
$v_t'$ is an identity block, or if it has already been visited
previously, then we do not use row $r_1$. If the block $v_t'$ is zero, we need to check whether or not it can be transformed back to zero again.
Here, the flag $v_f$ comes into play. We set the
flagged block as $v_f = u$ in the call to \textproc{erasable} in Line~\ref{line:erasable_recurse}, since
we do not want to use $u$ to zero out $v_t'$, thereby undoing the
operations to zero out $v_t$.

\begin{algorithm}
  \caption{Check whether or not $v_t$ is column erasable without using
    block $v_f$.}
	\begin{algorithmic}[1]
		\Function{col\_erasable}{$v_t$, $v_f$, $visited$}
    \State $V'$ = \textproc{row\_side\_effect}($v_t$)
    \State $visited \gets visited \cup \set{v_t}$
    \ForAll{$v_E \in V'$ not in $visited$, $v_E \neq v_f$, and column operation from $v_E$ to $v_t$ permissible}
      \State $usable \gets$ true; $subtrees \gets \{\}$
      \ForAll{$u \in$ \textproc{nonzero\_col\_neighbors}($v_E$)}
        \State $v_t' \gets$ the block in same column as $v_t$ and same row as $u$. \label{line:colnewtarget}
        \If{ $v_t'$ is in $visited$ or $v_t' = E$}
          \State $usable \gets$ false; \textbf{break}
        \EndIf
        \If{($v_t' = 0$ and $(p \gets \textproc{erasable}(v_t', u, visited)) = $ null)}
          \State $usable \gets$ false; \textbf{break}
        \EndIf
        \State $subtrees \gets  subtrees \cup \{p\}$.
      \EndFor
      \If{$usable$}
    \State \Return process tree with root $(v_t, v_E)$ and arrows to the roots of $subtrees$.
    \EndIf
    \EndFor
    \State \Return null
		\EndFunction
	\end{algorithmic}
	\label{algo:colerasable}
\end{algorithm}

If a nonempty process tree is returned by the top-level call to
$\textproc{erasable}(v_t)$ in Algorithm~\ref{algo:main}, then $v_t$ is erasable. By construction, it suffices to traverse the process tree and do the
operations indicated to zero out $v_t$ and fix all side effects.

Let us reproduce here Theorem~\ref{main_theorem} concerning Algorithm~\ref{algo:main}.

\noindent\parbox{\textwidth}{\algomainthm*}

Whether or not Algorithm~\ref{algo:main} terminates depends not on the particular persistence module, but on the statuses of the blocks and the status changes brought about by the operations. Moreover, the operations to be performed only depends on the arrangement of the statuses. All these depend only on the initial arrangement, which in turn depends on the orientation $\tau$ and the ordering chosen for the intervals.

From a result in \cite{PMCL}, a
commutative ladder $\CL_n(\tau)$ is finite type if and only if $n \leq
4$, so that there are only a finite number of cases to check.
Below, we provide proofs for Theorem~\ref{main_theorem} with orientations $f$, $fb$, and
$fff$.  The proofs for the other orientations are similar.

Furthermore, an indecomposable decomposition can easily be read off
the resulting normal form consisting of only identity, zero, and
strongly zero block, and the correspondence to an indecomposable
decomposition of the persistence module $M$ is provided by Theorem~\ref{isothm}.

We were unable to find a proof that does not involve manually checking each possible orientation. Given a particular persistence module, it is clear for each completed iteration of the main while loop in Algorithm~\ref{algo:main}, the total number of scalar entries in unprocessed blocks strictly decreases. Moreover, the procedure \textproc{erasable} avoids any infinite recursion by construction. The difficulty comes from the use of Algorithm~\ref{algo:main}, line~\ref{line:main_choice} for choosing $v_*$ and subsequently showing that all side effects can always be resolved.


\subsubsection{Case $\CL_2(f)$}
\label{subsubsec:clf}
The input block matrix problem is of the form
\begin{equation*}
	\mattikz{
    \matheader{
      \n \& \* \& \* \\
      \* \& \* \& \n \\ 
      \* \& \n \& \n \\
		};
		\foreach[count = \i] \v in {\(\Int{1, 1}\), \(\Int{1, 2}\), \(\Int{2, 2}\)}{
      \node[left=2.5ex] at (p-\i-1) {\scriptsize\v};
		}
		\foreach[count = \i] \v in {\(\Int{2, 2}\), \(\Int{1, 2}\), \(\Int{1, 1}\)}{
      \node[above=1.5ex] at (p-1-\i) {\scriptsize\v};
		}
		\node[right=6pt] (LR) at (p-1-3.east |- p-3-3.south east) {};
		\rowarrow{(LR |- p-3-3.east)}{(LR |- p-1-3.east)};
		\colarrow{(p-3-3.south |- LR)}{(p-3-1.south |- LR)};
	}
\end{equation*}
in general. Initially, all top to bottom and left to right operations
are impermissible. The red arrows show the additional impermissible
operations.

First the unprocessed block at $v_* = (\Int{1,1}, \Int{1,1})$
is transformed by inner elementary operations to Smith normal form
$\left[\SNF\right]$. Note that $v_*$ has no identity neighbors so that
there are no side effects to undo.

Updating the block partitioning, the matrix is now in the form
\begin{equation*}
	\mattikz{
    \matheader{
      \n \& \* \& \E \& \0 \\
      \n \& \* \& \0 \& \0 \\
      \* \& \* \& \n \& \n \\ 
      \* \& \n \& \n \& \n \\
		};
		\foreach[count = \i] \v in {\(\Int{1, 1}_1\), \(\Int{1, 1}_2\), \(\Int{1, 2}\), \(\Int{2, 2}\)}{
      \node[left=2.5ex] at (p-\i-1) {\scriptsize\v};
		}
		\foreach[count = \i] \v in {\(\Int{2, 2}\), \(\Int{1, 2}\), \(\Int{1, 1}_1\), \(\Int{1, 1_2}\)}{
      \node[above=1.5ex] at (p-1-\i) {\scriptsize\v};
		}
		\node[right=6pt] (LR) at (p-1-4.east |- p-4-4.south east) {};
		\colarrow{(p-3-3.south |- LR)}{(p-3-1.south |- LR)};
	}.
\end{equation*}
For convenience, we use subscripts to distinguish the two columns and rows
corresponding to $\Int{1,1}$ obtained after the repartitioning.
Additions from the columns in $\Int{1,1}_1$ to the columns in
$\Int{1,2}$ are permitted, and the unprocessed submatrix $(\Int{1,1}_1,\Int{1,2})$ is
erasable using the newly processed $E$, without any side effects. We get the form
\begin{equation} \label{CLf1}
	\mattikz{
    \matheader{
      \n \& \0 \& \E \& \0 \\
      \n \& \* \& \0 \& \0 \\
      \* \& \* \& \n \& \n \\ 
      \* \& \n \& \n \& \n \\
		};
		\foreach[count = \i] \v in {\(\Int{1, 1}_1\), \(\Int{1, 1}_2\), \(\Int{1, 2}\), \(\Int{2, 2}\)}{
      \node[left=2.5ex] at (p-\i-1) {\scriptsize\v};
		}
		\foreach[count = \i] \v in {\(\Int{2, 2}\), \(\Int{1, 2}\), \(\Int{1, 1}_1\), \(\Int{1, 1_2}\)}{
      \node[above=1.5ex] at (p-1-\i) {\scriptsize\v};
		}
	}
  \cong
  \mattikz{
    \matheader{
      \n \& \*  \\
      \* \& \*   \\ 
      \* \& \n  \\     
    };
    \foreach[count = \i] \v in {\(\Int{1, 1}_2\), \(\Int{1, 2}\), \(\Int{2, 2}\)}{
      \node[left=2.5ex] at (p-\i-1) {\scriptsize\v};
    }
    \foreach[count = \i] \v in {\(\Int{2, 2}\), \(\Int{1, 2}\)}{
      \node[above=1.5ex] at (p-1-\i) {\scriptsize\v};
    } 
	}
  \bigoplus
  \mattikz{
    \matheader{
      E \\
    };
		\foreach[count = \i] \v in {\(\Int{1, 1}_1\)}{
      \node[left=2.5ex] at (p-\i-1) {\scriptsize\v};
    }
    \foreach[count = \i] \v in {\(\Int{1, 1}_1\)}{
      \node[above=1.5ex] at (p-1-\i) {\scriptsize\v};
    } 
  }
  \bigoplus
  \mattikz{
    \rowempty
    \foreach[count = \i] \v in {\(\Int{1, 1}_2\)}{
      \node[above=1.5ex] at (p-1-\i) {\scriptsize\v};
    } 
  }		  
\end{equation}
which we have expressed as a direct sum of block matrices.

Here, we can extract two indecomposable representations of
$\CL_2(f)$. The identity submatrix $E$ in $(\Int{1,1}_1, \Int{1,1}_1)$ is
\begin{equation*}
	Ef\itoi{1,1,1,1} = \left[
		\begin{array}{cccc}
			1 f\itoi{1,1,1,1} & 0                           & \ldots & 0 \\
			0                           & 1f \itoi{1,1,1,1} & \ldots & 0 \\
			\vdots                      & \vdots                      & \ddots & \vdots \\
			0                           & 0                           & \ldots & 1 f\itoi{1,1,1,1}
		\end{array}
  \right]
\end{equation*}
where
\begin{equation} \label{eq:f1111}
	f_{\Int{1,1}}^{\Int{1,1}} =
	\begin{tikzcd}
		K \rar & 0 \\
		K \rar \uar{\Id_K} & 0 \uar
	\end{tikzcd}
\end{equation}
as in the proof of Lemma~\ref{lemma:homdim}. 

Via the isomorphism functor $F$ in Theorem~\ref{isothm}, the arrow $f\itoi{1,1,1,1}$
in Eq.~\eqref{eq:f1111} can be regarded as the corresponding
representation $F^{-1}(f_{\Int{1,1}}^{\Int{1,1}})$. This is indecomposable.
Thus, $Ef_{\Int{1,1}}^{\Int{1,1}}$ corresponds
to a direct sum of $m$ copies of the representation in
Eq.~\eqref{eq:f1111}, where $m$ is the size of $E$.

The third term in Eq.~\eqref{CLf1} is an empty matrix with
$0$ rows, and represents the arrow $0:\intv[1,1]^{m_1} \rightarrow
0$ in $\rep{\A_n(f)}$, where $m_1$ is the number of $K$-vector columns
in $\Int{1,1}_2$. By the isomorphism, this corresponds to a
direct sum of $m_1$ copies of the indecomposable representation
\begin{equation*}
	\begin{tikzcd}
		0 \rar & 0 \\
		K \rar \uar & 0 \uar
	\end{tikzcd}.
\end{equation*}

Now, the row $\Int{1,1}_1$ and columns $\Int{1,1}_1, \Int{1,1}_2$ in
the block matrix Eq.~\eqref{CLf1} will not affect nor be affected by
subsequent operations, so we hide them from the block matrix.
The unprocessed block $(\Int{1,2}, \Int{1,2})$ is next transformed to
Smith normal form to get
\begin{equation*}
	\mattikz{
    \matheader{
      \n \& \* \\
      \* \& \* \\ 
      \* \& \n \\
		};
		\foreach[count = \i] \v in {\(\Int{1, 1}\), \(\Int{1, 2}\), \(\Int{2, 2}\)}{
      \node[left=2.5ex] at (p-\i-1) {\scriptsize\v};
		}
		\foreach[count = \i] \v in {\(\Int{2, 2}\), \(\Int{1, 2}\)}{
      \node[above=1.5ex] at (p-1-\i) {\scriptsize\v};
		}
	}
  \cong
	\mattikz{
    \matheader{
      \n \& \* \& \* \\
      \* \& \E \& \0 \\ 
      \* \& \0 \& \0 \\ 
      \* \& \n \& \n \\
		};
		\foreach[count = \i] \v in {\(\Int{1, 1}\), \(\Int{1, 2}_1\), \(\Int{1, 2}_2\), \(\Int{2, 2}\)}{
      \node[left=2.5ex] at (p-\i-1) {\scriptsize\v};
		}
		\foreach[count = \i] \v in {\(\Int{2, 2}\), \(\Int{1, 2}_1\), \(\Int{1, 2}_2\)}{
      \node[above=1.5ex] at (p-1-\i) {\scriptsize\v};
		}
	}.
\end{equation*}
We see that $(\Int{1,1}, \Int{1,2}_1)$ is
erasable using $(\Int{1,2}_1, \Int{1,2}_1)$. The checking via
\textproc{row\_erasable} in Algorithm~\ref{algo:rowerasable} proceeds
as follows. While $u = (\Int{1,2}_1, \Int{2,2})$ is a nonzero row
neighbor of $E$, the computed potential side effect is $v_t' =
(\Int{1,1}, \Int{2,2})$. Since $v_t'$ is strongly zero, addition from
row $\Int{1,2}_1$ will not affect it.

Similarly, $(\Int{1,2}_1, \Int{2,2})$ is erasable. After zeroing out erasable blocks, we get
\begin{equation*}
	\mattikz{
    \matheader{
      \n \& \0 \& \* \\
      \* \& \E \& \0 \\ 
      \* \& \0 \& \0 \\ 
      \* \& \n \& \n \\
		};
		\foreach[count = \i] \v in {\(\Int{1, 1}\), \(\Int{1, 2}_1\), \(\Int{1, 2}_2\), \(\Int{2, 2}\)}{
      \node[left=2.5ex] at (p-\i-1) {\scriptsize\v};
		}
		\foreach[count = \i] \v in {\(\Int{2, 2}\), \(\Int{1, 2}_1\), \(\Int{1, 2}_2\)}{
      \node[above=1.5ex] at (p-1-\i) {\scriptsize\v};
		}
	}
  \text{ and then }
	\mattikz{
    \matheader{
      \n \& \0 \& \* \\
      \0 \& \E \& \0 \\ 
      \* \& \0 \& \0 \\ 
      \* \& \n \& \n \\
		};
		\foreach[count = \i] \v in {\(\Int{1, 1}\), \(\Int{1, 2}_1\), \(\Int{1, 2}_2\), \(\Int{2, 2}\)}{
      \node[left=2.5ex] at (p-\i-1) {\scriptsize\v};
		}
		\foreach[count = \i] \v in {\(\Int{2, 2}\), \(\Int{1, 2}_1\), \(\Int{1, 2}_2\)}{
      \node[above=1.5ex] at (p-1-\i) {\scriptsize\v};
		}
	}.
\end{equation*}
The identity submatrix $E$ in $(\Int{1,2}_1, \Int{1,2}_1)$
corresponds to copies of the indecomposable representation
\begin{equation*}
	\begin{tikzcd}
		K \rar & K \\
		K \rar \uar & K \uar
	\end{tikzcd}
\end{equation*}
as direct summands.

Once again we abbreviate the block matrix:
\begin{equation*}
	\mattikz{
    \matheader{
      \n \& \* \\
      \* \& \0 \\ 
      \* \& \n \\
		};
		\foreach[count = \i] \v in {\(\Int{1, 1}\), \(\Int{1, 2}\), \(\Int{2, 2}\)}{
      \node[left=2.5ex] at (p-\i-1) {\scriptsize\v};
		}
		\foreach[count = \i] \v in {\(\Int{2, 2}\), \(\Int{1, 2}\)}{
      \node[above=1.5ex] at (p-1-\i) {\scriptsize\v};
		}
	}
  \text{ and then }
	\mattikz{
    \matheader{
      \n \& \E \& \0 \\
      \n \& \0 \& \0 \\
      \* \& \0 \& \0 \\ 
      \* \& \n \& \n \\
		};
		\foreach[count = \i] \v in {\(\Int{1, 1}_1\), \(\Int{1, 1}_2\), \(\Int{1, 2}\), \(\Int{2, 2}\)}{
      \node[left=2.5ex] at (p-\i-1) {\scriptsize\v};
		}
		\foreach[count = \i] \v in {\(\Int{2, 2}\), \(\Int{1, 2}_1\), \(\Int{1, 2}_2\)}{
      \node[above=1.5ex] at (p-1-\i) {\scriptsize\v};
		}
	}
\end{equation*}
after transforming the next target $(\Int{1,1}, \Int{1,2})$ to Smith normal form.
The identity submatrix in $(\Int{1,1}_1, \Int{1,2}_1)$, the row
$\Int{1,1}_2$, and the column $\Int{1,2}_2$ correspond to copies of the 
indecomposable representations with dimension vectors $\dimv{10}{11}$, $\dimv{10}{00}$, and $\dimv{00}{11}$
respectively, as direct summands.

What remains is the form
  $
	\mattikz{
    \matheader{
      \* \\ 
      \* \\
		};
		\foreach[count = \i] \v in {\(\Int{1, 2}\), \(\Int{2, 2}\)}{
      \node[left=2.5ex] at (p-\i-1) {\scriptsize\v};
		}
		\foreach[count = \i] \v in {\(\Int{2, 2}\)}{
      \node[above=1.5ex] at (p-1-\i) {\scriptsize\v};
		}
	}
  $, from which we get
\begin{equation*}
	\mattikz{
    \matheader{
      \0 \& \* \\ 
      \E \& \0 \\
      \0 \& \0 \\
		};
		\foreach[count = \i] \v in {\(\Int{1, 2}\), \(\Int{2, 2}_1\), \(\Int{2, 2}_2\)}{
      \node[left=2.5ex] at (p-\i-1) {\scriptsize\v};
		}
		\foreach[count = \i] \v in {\(\Int{2, 2}_1\), \(\Int{2, 2}_2\)}{
      \node[above=1.5ex] at (p-1-\i) {\scriptsize\v};
		}
	}
\end{equation*}
after transforming the next target $(\Int{2,2}, \Int{2,2})$ to normal
form, and zeroing out erasable blocks. The identity submatrix
$(\Int{2,2}_1, \Int{2,2}_1)$ and the row $\Int{2,2}_2$ correspond
respectively to copies of the indecomposable representations with dimension vectors $\dimv{01}{01}$, and $\dimv{01}{00}$. 

Abbreviating again, we are left with
$
	\mattikz{
    \matheader{
      \* \\ 
		};
		\foreach[count = \i] \v in {\(\Int{1, 2}\)}{
      \node[left=2.5ex] at (p-\i-1) {\scriptsize\v};
		}
		\foreach[count = \i] \v in {\(\Int{2, 2}\)}{
      \node[above=1.5ex] at (p-1-\i) {\scriptsize\v};
		}
	}
$.
Transforming the last target $(\Int{1,2}, \Int{2,2})$ to normal form yields
\begin{equation*}
	\mattikz{
    \matheader{
      \E \& \0 \\ 
      \0 \& \0 \\ 
		};
		\node[anchor = south east] (p-0-0) at (p-2-1.west |- p-1-1.north west) {};
		\foreach[count = \i] \v in {\(\Int{1, 2}_1\), \(\Int{1, 2}_2\)}{
      \node[left=2.5ex] at (p-\i-1) {\scriptsize\v};
		}
		\foreach[count = \i] \v in {\(\Int{2, 2}_1\), \(\Int{2, 2}_2\)}{
      \node[above=1.5ex] at (p-1-\i) {\scriptsize\v};
		}
	}.
\end{equation*}
The identity submatrix $(\Int{1,2}_1, \Int{2,2}_1)$, the row
$\Int{1,2}_2$, and the column $\Int{2,2}_2$ correspond respectively to
copies of the indecomposable representations with dimension vectors $\dimv{11}{01}$, $\dimv{11}{00}$, and $\dimv{00}{01}$.

It is clear that we have obtained all possible indecomposable
representations of $\CL_2(f)$. This can be confirmed for example by
checking with the Auslander-Reiten quiver of $\CL_2(f)$.
Given a particular persistence module $M$ on
$\CL_2(f)$, the algorithm gives the multiplicities of
each of these indecomposables in an indecomposable decomposition of $M$.


\subsubsection{Case $\CL_3(fb)$}
\label{subsec:clfb}
The input block matrix is given in Example~\ref{example:clfb_mp}.

While the presence of impermissible operations did not cause any noticeable complications in the case of $\CL_2(f)$, in general this is not so. For better readability, we
indicate only the relevant impermissible operations at each step
below. Below, each numbered step corresponds to one pass of the outer while
loop in Algorithm~\ref{algo:main}.

\begin{enumerate}[leftmargin=*]

\item In this step, $v_*$ is $(\Int{1,1}, \Int{1,1})$.
  \begin{enumerate}
  \item Transform $(\Int{1,1}, \Int{1,1})$ to Smith normal form, giving the block matrix
    \[
      \mattikz{
        \matheader{
          \n \& \n \& \* \& \* \& \n \& \E \& \0 \\
          \n \& \n \& \* \& \* \& \n \& \0 \& \0 \\
          \n \& \* \& \n \& \* \& \* \& \n \& \n \\
          \* \& \* \& \* \& \* \& \n \& \n \& \n \\
          \* \& \n \& \* \& \n \& \n \& \n \& \n \\
          \* \& \* \& \n \& \n \& \n \& \n \& \n \\
          \* \& \n \& \n \& \n \& \n \& \n \& \n \\
        };
        \foreach[count = \i] \v in {
          \(\Int{1, 1}_1\), \(\Int{1, 1}_2\),
          \(\Int{3, 3}\), \(\Int{1, 3}\),
          \(\Int{1, 2}\), \(\Int{2, 3}\), \(\Int{2, 2}\)}
        {
          \node[left=2.5ex] at (p-\i-1) {\scriptsize\v};
        }
        \foreach[count = \i] \v in {
          \(\Int{2, 2}\), \(\Int{2, 3}\),
          \(\Int{1, 2}\), \(\Int{1, 3}\),
          \(\Int{3, 3}\), \(\Int{1, 1}_1\), \(\Int{1, 1}_2\)}
        {
          \node[above=1.5ex] at (p-1-\i) {\scriptsize\v};
        }
        \node[right=6pt] (LR) at (p-1-7.east |- p-7-7.south east) {};
      }.
    \]

  \item Zero out the blocks $(\Int{1,1}_1, \Int{1,2})$ and
    $(\Int{1,1}_1, \Int{1,3})$ by additions from the identity submatrix
    at $(\Int{1,1}_1, \Int{1,1}_1)$:
    \[
      \mattikz{
        \matheader{
          \n \& \n \& 0  \&  0 \& \n \& \E \& \0 \\
          \n \& \n \& \* \& \* \& \n \& \0 \& \0 \\
          \n \& \* \& \n \& \* \& \* \& \n \& \n \\
          \* \& \* \& \* \& \* \& \n \& \n \& \n \\
          \* \& \n \& \* \& \n \& \n \& \n \& \n \\
          \* \& \* \& \n \& \n \& \n \& \n \& \n \\
          \* \& \n \& \n \& \n \& \n \& \n \& \n \\
        };
        \foreach[count = \i] \v in {
          \(\Int{1, 1}_1\), \(\Int{1, 1}_2\),
          \(\Int{3, 3}\), \(\Int{1, 3}\),
          \(\Int{1, 2}\), \(\Int{2, 3}\), \(\Int{2, 2}\)}
        {
          \node[left=2.5ex] at (p-\i-1) {\scriptsize\v};
        }
        \foreach[count = \i] \v in {
          \(\Int{2, 2}\), \(\Int{2, 3}\),
          \(\Int{1, 2}\), \(\Int{1, 3}\),
          \(\Int{3, 3}\), \(\Int{1, 1}_1\), \(\Int{1, 1}_2\)}
        {
          \node[above=1.5ex] at (p-1-\i) {\scriptsize\v};
        }
      }.
    \]
    
  \item The identity submatrix $(\Int{1,1}_1, \Int{1,1}_1)$ and the
    columns in $\Int{1,1}_2$ give copies of
    indecomposable representations isomorphic to
    \[
      \begin{tikzcd}
        K \rar      & 0      & 0 \lar \\
        K \rar \uar & 0 \uar & 0 \lar \uar
      \end{tikzcd}
      \text{ and } 
      \begin{tikzcd}
        0 \rar      & 0      & 0 \lar \\
        K \rar \uar & 0 \uar & 0 \lar \uar
      \end{tikzcd}
    \]
    corresponding to the vertices $\dimv{100}{100}$ and
    $\dimv{000}{100}$ in Figure \ref{fig:arfb}.

  \item We are left with
    \[
      \mattikz{
        \matheader{
          \n \& \n \& \* \& \* \& \n \\
          \n \& \* \& \n \& \* \& \* \\
          \* \& \* \& \* \& \* \& \n \\
          \* \& \n \& \* \& \n \& \n \\
          \* \& \* \& \n \& \n \& \n \\
          \* \& \n \& \n \& \n \& \n \\
        };
        \foreach[count = \i] \v in {
          \(\Int{1, 1}\), \(\Int{3, 3}\),
          \(\Int{1, 3}\), \(\Int{1, 2}\),
          \(\Int{2, 3}\), \(\Int{2, 2}\)}
        {
          \node[left=2.5ex] at (p-\i-1) {\scriptsize\v};
        }
        \foreach[count = \i] \v in {
          \(\Int{2, 2}\), \(\Int{2, 3}\),
          \(\Int{1, 2}\), \(\Int{1, 3}\),
          \(\Int{3, 3}\)}
        {
          \node[above=1.5ex] at (p-1-\i) {\scriptsize\v};
        }
      }.
    \]     
  \end{enumerate}

\item We combine the next two steps. Here, $v_*$ is $(\Int{3,3}, \Int{3,3})$, and then...
\item ... $v_*$ is $(\Int{1,3}, \Int{1,3})$.

  The direct summands with dimension vectors $\dimv{001}{001}$ and
  $\dimv{000}{001}$, and then $\dimv{111}{111}$ can be extracted without any extra complications:
  \[
    \mattikz{
      \matheader{
        \n \& \n \& \* \& \* \\
        \n \& \* \& \n \& \* \\
        \* \& \* \& \* \& \* \\
        \* \& \n \& \* \& \n \\
        \* \& \* \& \n \& \n \\
        \* \& \n \& \n \& \n \\
      };
      \foreach[count = \i] \v in {
        \(\Int{1, 1}\), \(\Int{3, 3}\),
        \(\Int{1, 3}\), \(\Int{1, 2}\),
        \(\Int{2, 3}\), \(\Int{2, 2}\)}
      {
        \node[left=2.5ex] at (p-\i-1) {\scriptsize\v};
      }
      \foreach[count = \i] \v in {
        \(\Int{2, 2}\), \(\Int{2, 3}\), \(\Int{1, 2}\), \(\Int{1, 3}\)}
      {
        \node[above=1.5ex] at (p-1-\i) {\scriptsize\v};
      }
    }
   \text{ and then }
    \mattikz{
      \matheader{
        \n \& \n \& \* \& \* \\
        \n \& \* \& \n \& \* \\
        \* \& \* \& \* \& \0 \\
        \* \& \n \& \* \& \n \\
        \* \& \* \& \n \& \n \\
        \* \& \n \& \n \& \n \\
      };
      \foreach[count = \i] \v in {
        \(\Int{1, 1}\), \(\Int{3, 3}\),
        \(\Int{1, 3}\), \(\Int{1, 2}\),
        \(\Int{2, 3}\), \(\Int{2, 2}\)}
      {
        \node[left=2.5ex] at (p-\i-1) {\scriptsize\v};
      }
      \foreach[count = \i] \v in {
        \(\Int{2, 2}\), \(\Int{2, 3}\), \(\Int{1, 2}\), \(\Int{1, 3}\)}
      {
        \node[above=1.5ex] at (p-1-\i) {\scriptsize\v};
      }
    }.
  \]
\setcounter{enumi}{3} 
\item After transforming $v_* = (\Int{3,3}, \Int{1,3})$ to Smith
  normal form, the block matrix is now
  \[
    \mattikz{
      \matheader{
        \n \& \n \& \* \& \* \& \* \\
        \n \& \* \& \n \& \E \& \0 \\
        \n \& \* \& \n \& \0 \& \0 \\
        \* \& \* \& \* \& \0 \& \0 \\
        \* \& \n \& \* \& \n \& \n \\
        \* \& \* \& \n \& \n \& \n \\
        \* \& \n \& \n \& \n \& \n \\
      };
      \foreach[count = \i] \v in {
        \(\Int{1, 1}\), \(\Int{3, 3}_1\), \(\Int{3, 3}_2\),
        \(\Int{1, 3}\), \(\Int{1, 2}\),
        \(\Int{2, 3}\), \(\Int{2, 2}\)}
      {
        \node[left=2.5ex] at (p-\i-1) {\scriptsize\v};
      }
      \foreach[count = \i] \v in {
        \(\Int{2, 2}\), \(\Int{2, 3}\), \(\Int{1, 2}\), \(\Int{1, 3}_1\), \(\Int{1, 3}_2\)}
      {
        \node[above=1.5ex] at (p-1-\i) {\scriptsize\v};
      }
      \node[right=6pt] (LR) at (p-1-5.east |- p-7-5.south east) {};
      \rowarrow{ (LR |- p-2-7.east) }{ (LR |- p-1-7.east)};
    }
    \text{ and then }
    \mattikz{
      \matheader{
        \n \& \n \& \* \& \* \& \* \\
        \n \& \0 \& \n \& \E \& \0 \\
        \n \& \* \& \n \& \0 \& \0 \\
        \* \& \* \& \* \& \0 \& \0 \\
        \* \& \n \& \* \& \n \& \n \\
        \* \& \* \& \n \& \n \& \n \\
        \* \& \n \& \n \& \n \& \n \\
      };
      \foreach[count = \i] \v in {
        \(\Int{1, 1}\), \(\Int{3, 3}_1\), \(\Int{3, 3}_2\),
        \(\Int{1, 3}\), \(\Int{1, 2}\),
        \(\Int{2, 3}\), \(\Int{2, 2}\)}
      {
        \node[left=2.5ex] at (p-\i-1) {\scriptsize\v};
      }
      \foreach[count = \i] \v in {
        \(\Int{2, 2}\), \(\Int{2, 3}\), \(\Int{1, 2}\), \(\Int{1, 3}_1\), \(\Int{1, 3}_2\)}
      {
        \node[above=1.5ex] at (p-1-\i) {\scriptsize\v};
      }
      \node[right=6pt] (LR) at (p-1-5.east |- p-7-5.south east) {};
      \rowarrow{ (LR |- p-2-5.east) }{ (LR |- p-1-5.east)};
    }.
  \]
  Row operations from $\Int{3, 3}_1$ to $\Int{1, 1}$ are impermissible, and so the only candidate $E$ cannot be used to zero out $(\Int{1,1},\Int{1,3}_1)$. Block $(\Int{1,1},\Int{1,3}_1)$ is therefore not erasable. The block $(\Int{3, 3}_1, \Int{2, 3})$ however, is erasable and is zeroed out. No
  direct summands are identified at this step.

\item Next, $v_*$ is $(\Int{1,1}, \Int{1,3}_2)$ and direct summands
  $\dimv{100}{111}$ and $\dimv{000}{111}$ can be extracted. We are left with the block matrix form:
  \[
    \mattikz{
      \matheader{
        \n \& \n \& \* \& \* \\
        \n \& \0 \& \n \& \E \\
        \n \& \* \& \n \& \0 \\
        \* \& \* \& \* \& \0 \\
        \* \& \n \& \* \& \n \\
        \* \& \* \& \n \& \n \\
        \* \& \n \& \n \& \n \\
      };
      \foreach[count = \i] \v in {
        \(\Int{1, 1}\), \(\Int{3, 3}_1\), \(\Int{3, 3}_2\),
        \(\Int{1, 3}\), \(\Int{1, 2}\),
        \(\Int{2, 3}\), \(\Int{2, 2}\)}
      {
        \node[left=2.5ex] at (p-\i-1) {\scriptsize\v};
      }
      \foreach[count = \i] \v in {
        \(\Int{2, 2}\), \(\Int{2, 3}\), \(\Int{1, 2}\), \(\Int{1, 3}\)}
      {
        \node[above=1.5ex] at (p-1-\i) {\scriptsize\v};
      }
      \node[right=6pt] (LR) at (p-1-4.east |- p-7-4.south east) {};
      \rowarrow{ (LR |- p-2-4.east) }{ (LR |- p-1-4.east)};
    }.
  \] 

\item
  After transforming $v_* = (\Int{1,1}, \Int{1,3})$ to Smith normal
  form, the column neighbor $(\Int{3,3}_1, \Int{1,3})$ may no longer be the identity. After \textproc{row\_fix}
  it is transformed back to an identity. The block matrix is now in
  the following form:
  \[
    \mattikz{
      \matheader{
        \n \& \n \& \* \& \E \& \0 \\
        \n \& \n \& \* \& \0 \& \0 \\
        \n \& \0 \& \n \& \E \& \0 \\
        \n \& \0 \& \n \& \0 \& \E \\
        \n \& \* \& \n \& \0 \& \0 \\
        \* \& \* \& \* \& \0 \& \0 \\
        \* \& \n \& \* \& \n \& \n \\
        \* \& \* \& \n \& \n \& \n \\
        \* \& \n \& \n \& \n \& \n \\
      };
      \foreach[count = \i] \v in {
        \(\Int{1, 1}_1\), \(\Int{1, 1}_2\),
        \(\Int{3, 3}_1\), \(\Int{3, 3}_2\), \(\Int{3, 3}_3\),
        \(\Int{1, 3}\), \(\Int{1, 2}\),
        \(\Int{2, 3}\), \(\Int{2, 2}\)}
      {
        \node[left=2.5ex] at (p-\i-1) {\scriptsize\v};
      }
      \foreach[count = \i] \v in {
        \(\Int{2, 2}\), \(\Int{2, 3}\),
        \(\Int{1, 2}\), \(\Int{1, 3}_1\), \(\Int{1, 3}_2\)}{
        \node[above=1.5ex] at (p-1-\i) {\scriptsize\v};
      }
      \node[right=6pt] (LR) at (p-1-5.east |- p-9-5.south east) {};
      \rowarrow{ (LR |- p-3-5.east) }{ (LR |- p-1-5.east)};
    }
  \]
  after repartitioning to isolate identity submatrices into their own blocks.

  After zeroing out $(\Int{1,1}_1, \Int{1,2})$, direct summands with
  dimension vectors $\dimv{101}{111}$ and $\dimv{001}{111}$ can be
  extracted, as follows. We get the decomposition
  
  \[
    \mattikz{
      \matheader{
        \n \& \n \& \0 \& \E \& \0 \\
        \n \& \n \& \* \& \0 \& \0 \\
        \n \& \0 \& \n \& \E \& \0 \\
        \n \& \0 \& \n \& \0 \& \E \\
        \n \& \* \& \n \& \0 \& \0 \\
        \* \& \* \& \* \& \0 \& \0 \\
        \* \& \n \& \* \& \n \& \n \\
        \* \& \* \& \n \& \n \& \n \\
        \* \& \n \& \n \& \n \& \n \\
      };
      \foreach[count = \i] \v in {
        \(\Int{1, 1}_1\), \(\Int{1, 1}_2\),
        \(\Int{3, 3}_1\), \(\Int{3, 3}_2\), \(\Int{3, 3}_3\),
        \(\Int{1, 3}\), \(\Int{1, 2}\),
        \(\Int{2, 3}\), \(\Int{2, 2}\)}
      {
        \node[left=2.5ex] at (p-\i-1) {\scriptsize\v};
      }
      \foreach[count = \i] \v in {
        \(\Int{2, 2}\), \(\Int{2, 3}\),
        \(\Int{1, 2}\), \(\Int{1, 3}_1\), \(\Int{1, 3}_2\)}{
        \node[above=1.5ex] at (p-1-\i) {\scriptsize\v};
      }
    }
    \cong    
    \mattikz{
      \matheader{
        \n \& \n \& \* \\
        \n \& \* \& \n \\
        \* \& \* \& \* \\
        \* \& \n \& \* \\
        \* \& \* \& \n \\
        \* \& \n \& \n \\
      };
      \foreach[count = \i] \v in {
        \(\Int{1, 1}_2\), \(\Int{3, 3}_3\),
        \(\Int{1, 3}\), \(\Int{1, 2}\), \(\Int{2, 3}\), \(\Int{2, 2}\)}{
        \node[left=2.5ex] at (p-\i-1) {\scriptsize\v};
      }
      \foreach[count = \i] \v in {
        \(\Int{2, 2}\), \(\Int{2, 3}\), \(\Int{1, 2}\)}
      {
        \node[above=1.5ex] at (p-1-\i) {\scriptsize\v};
      }
    }
    \bigoplus
    \mattikz{
      \matheader{
        E \\
        E \\
      };
      \foreach[count = \i] \v in {\(\Int{1, 1}_1\), \(\Int{3,3}_1\)}{
        \node[left=2.5ex] at (p-\i-1) {\scriptsize\v};
      }
      \foreach[count = \i] \v in {\(\Int{1, 3}_1\)}{
        \node[above=1.5ex] at (p-1-\i) {\scriptsize\v};
      } 
    }
    \bigoplus
    \mattikz{
      \matheader{
        E \\
      };
      \foreach[count = \i] \v in {\(\Int{3, 3}_2\)}{
        \node[left=2.5ex] at (p-\i-1) {\scriptsize\v};
      }
      \foreach[count = \i] \v in {\(\Int{1, 3}_2\)}{
        \node[above=1.5ex] at (p-1-\i) {\scriptsize\v};
      } 
    },
  \]
  where the second term corresponds to $\dimv{101}{111}$, and the
  third term to $\dimv{001}{111}$. The first term is sent to the next step.

\item We combine the next two steps. First, $v_*$ is $(\Int{1,2},
  \Int{1,2})$ and $\dimv{110}{110}$ is extracted.
\item Subsequently, $v_*$ is $(\Int{1,3}, \Int{1,2})$. These two steps yield the block matrix forms
  \[
    \mattikz{
      \matheader{
        \n \& \n \& \* \\
        \n \& \* \& \n \\
        \* \& \* \& \* \\
        \* \& \n \& \0 \\
        \* \& \* \& \n \\
        \* \& \n \& \n \\
      };
      \foreach[count = \i] \v in {\(\Int{1, 1}\), \(\Int{3, 3}\), \(\Int{1, 3}\), \(\Int{1, 2}\), \(\Int{2, 3}\), \(\Int{2, 2}\)}{
        \node[left=2.5ex] at (p-\i-1) {\scriptsize\v};
      }
      \foreach[count = \i] \v in {\(\Int{2, 2}\), \(\Int{2, 3}\), \(\Int{1, 2}\)}{
        \node[above=1.5ex] at (p-1-\i) {\scriptsize\v};
      }
    }
    \text{ and then }
    \mattikz{
      \matheader{
        \n \& \n \& \0 \& \* \\
        \n \& \* \& \n \& \n \\
        \0 \& \* \& \E \& \0 \\
        \* \& \* \& \0 \& \0 \\
        \* \& \n \& \0 \& \0 \\
        \* \& \* \& \n \& \n \\
        \* \& \n \& \n \& \n \\
      };
      \foreach[count = \i] \v in {
        \(\Int{1, 1}\), \(\Int{3, 3}\),
        \(\Int{1, 3}_1\), \(\Int{1, 3}_2\), \(\Int{1, 2}\), \(\Int{2,
          3}\), \(\Int{2, 2}\)}{
        \node[left=2.5ex] at (p-\i-1) {\scriptsize\v};
      }
      \foreach[count = \i] \v in {
        \(\Int{2, 2}\), \(\Int{2, 3}\), \(\Int{1, 2}_1\), \(\Int{1, 2}_2\)}
      {
        \node[above=1.5ex] at (p-1-\i) {\scriptsize\v};
      }
      \node[right=6pt] (LR) at (p-1-4.east |- p-7-4.south east) {};
      \colarrow{ (p-7-3.south |- LR) }{ (p-7-2.south |- LR)};
    }.
  \]
  Note that column operations from $\Int{1,2}$ to $\Int{2,3}$ are
  impermissible.

\item Combining steps again for brevity, $v_*$ is $(\Int{1,1}, \Int{1,2}_2)$ and then... 
\item ... $v_*$ is $(\Int{2,3}, \Int{2,3})$.

  First $\dimv{100}{110}$, $\dimv{100}{000}$, and $\dimv{000}{110}$ are
  extracted, and then $\dimv{011}{011}$ is extracted. The matrix form
  becomes
  \[
    \mattikz{
      \matheader{
        \n \& \* \& \n \\
        \0 \& \* \& \E \\
        \* \& \* \& \0 \\
        \* \& \n \& \0 \\
        \* \& \* \& \n \\
        \* \& \n \& \n \\
      };
      \foreach[count = \i] \v in {\(\Int{3, 3}\), \(\Int{1, 3}_1\), \(\Int{1, 3}_2\), \(\Int{1, 2}\), \(\Int{2, 3}\), \(\Int{2, 2}\)}{
        \node[left=2.5ex] at (p-\i-1) {\scriptsize\v};
      }
      \foreach[count = \i] \v in {\(\Int{2, 2}\), \(\Int{2, 3}\), \(\Int{1, 2}\)}{
        \node[above=1.5ex] at (p-1-\i) {\scriptsize\v};
      }
      \node[right=6pt] (LR) at (p-1-3.east |- p-6-3.south east) {};
      \colarrow{ (p-6-3.south |- LR) }{ (p-6-2.south |- LR)};
    }
  \text{ and then }
    \mattikz{
      \matheader{
        \n \& \* \& \n \\
        \0 \& \* \& \E \\
        \* \& \* \& \0 \\
        \* \& \n \& \0 \\
        \* \& \0 \& \n \\
        \* \& \n \& \n \\
      };
      \foreach[count = \i] \v in {\(\Int{3, 3}\), \(\Int{1, 3}_1\), \(\Int{1, 3}_2\), \(\Int{1, 2}\), \(\Int{2, 3}\), \(\Int{2, 2}\)}{
        \node[left=2.5ex] at (p-\i-1) {\scriptsize\v};
      }
      \foreach[count = \i] \v in {\(\Int{2, 2}\), \(\Int{2, 3}\), \(\Int{1, 2}\)}{
        \node[above=1.5ex] at (p-1-\i) {\scriptsize\v};
      }
      \node[right=6pt] (LR) at (p-1-3.east |- p-6-3.south east) {};
      \colarrow{ (p-6-3.south |- LR) }{ (p-6-2.south |- LR)};
    }.
  \]

\item Here, $v_*$ is $(\Int{1,3}_2, \Int{2,3})$. After transforming
  $v_*$ to Smith normal form, the matrix is of the form
  \[
    \mattikz{
      \matheader{
        \n \& \* \& \* \& \n \\
        \0 \& \* \& \* \& \E \\
        \* \& \E \& \0 \& \0 \\
        \* \& \0 \& \0 \& \0 \\
        \* \& \n \& \n \& \0 \\
        \* \& \0 \& \0 \& \n \\
        \* \& \n \& \n \& \n \\
      };
      \foreach[count = \i] \v in {\(\Int{3, 3}\), \(\Int{1, 3}_1\), \(\Int{1, 3}_2\), \(\Int{1, 3}_3\), \(\Int{1, 2}\), \(\Int{2, 3}\), \(\Int{2, 2}\)}{
        \node[left=2.5ex] at (p-\i-1) {\scriptsize\v};
      }
      \foreach[count = \i] \v in {\(\Int{2, 2}\), \(\Int{2, 3}_1\), \(\Int{2, 3}_2\), \(\Int{1, 2}\)}{
        \node[above=1.5ex] at (p-1-\i) {\scriptsize\v};
      }
      \node[right=6pt] (LR) at (p-1-4.east |- p-7-4.south east) {};
      \colarrow{ (p-7-4.south |- LR) }{ (p-7-3.south |- LR)};
      \colarrow{ (p-7-4.south |- LR) }{ (p-7-2.south |- LR)};
    }.
  \]
  First of all, note that $(\Int{3,3}, \Int{2,3}_1)$ is erasable, without any
  concerns of side effects.

  Moreover, $v_t = (\Int{1,3}_1, \Int{2,3}_1)$ is also erasable. Zeroing it out
  by additions from the identity $E$ at
  $(\Int{1,3}_2, \Int{2,3}_1)$ may cause the $0$ block at
  $v_t' = (\Int{1,3}_1, \Int{2,2})$ to become nonzero, but $v_t'$ can be
  zeroed out again by additions from $(\Int{1,3}_1, \Int{1,2})$. In
  other words, the side effect $v_t'$ is erasable, and thus $v_t$ is, too. This illustrates
  the idea behind Algorithm \ref{algo:rowerasable} and the recursive
  checking of erasability. Similarly, $(\Int{1,3}_2, \Int{2,2})$ is also erasable.

  We are thus able to extract $\dimv{111}{011}$, leaving the form:  
  \[
    \mattikz{
      \matheader{
        \n \& \* \& \n \\
        \0 \& \* \& \E \\
        \* \& \0 \& \0 \\
        \* \& \n \& \0 \\
        \* \& \0 \& \n \\
        \* \& \n \& \n \\
      };
      \foreach[count = \i] \v in {\(\Int{3, 3}\), \(\Int{1, 3}_1\),
        \(\Int{1, 3}_2\), \(\Int{1, 2}\), \(\Int{2, 3}\), \(\Int{2,
          2}\)}{
        \node[left=2.5ex] at (p-\i-1) {\scriptsize\v};
      }
      \foreach[count = \i] \v in {\(\Int{2, 2}\), \(\Int{2, 3}\), \(\Int{1, 2}\)}{
        \node[above=1.5ex] at (p-1-\i) {\scriptsize\v};
      }
      \node[right=6pt] (LR) at (p-1-3.east |- p-6-3.south east) {};
      \colarrow{ (p-6-3.south |- LR) }{ (p-6-2.south |- LR)};
    }.
  \]

\item 
  The procedures from this step on are similar to the ones we have
  done, and direct summands with dimension vectors
  $\dimv{111}{110}$,
  $\dimv{111}{121}$,
  $\dimv{001}{011}$,
  $\dimv{001}{000}$,
  $\dimv{000}{011}$,
  $\dimv{010}{010}$,
  $\dimv{010}{000}$,
  $\dimv{011}{000}$,
  $\dimv{110}{010}$,
  $\dimv{111}{010}$,
  $\dimv{111}{000}$,
  $\dimv{000}{010}$,
  $\dimv{011}{010}$,
  $\dimv{121}{010}$, and
  $\dimv{110}{000}$ will be
  extracted.

  The dimension vector $\dimv{111}{121}$ comes from the direct summand
  $
    \mattikz{
      \matheader{
        E \& E\\ 
      };
      \foreach[count = \i] \v in {\(\Int{1, 3}\)}{
        \node[left=2.5ex] at (p-\i-1) {\scriptsize\v};
      }
      \foreach[count = \i] \v in {\(\Int{2, 3}\), \(\Int{1, 2}\)}{
        \node[above=1.5ex] at (p-1-\i) {\scriptsize\v};
      }
      \node[right=6pt] (LR) at (p-1-2.east |- p-1-2.south east) {};
      \draw[<->, red, postaction={decorate}] (p-1-2.south |- LR)  to[out=210,in=330] (p-1-1.south |- LR);
      
    }.
  $
  This is $m$ copies of the arrow
  $
  \mat{
    \begin{array}{cc}
      f\itoi{2,3,1,3} & f\itoi{1,2,1,3}
    \end{array}
  }
  :
  \intv[2,3] \oplus \intv[1,2] \rightarrow \intv[1,3]
  $,
  where $m$ is the size of the $E$.
  The dimension vector $\dimv{121}{010}$ similarly comes from a summand with two identity blocks that cannot zero out each other. Via the isomorphism $F$ in Theorem~\ref{isothm}, these summands corresponds to the representations given in Eq.~\eqref{eq:dim2_indecs}.
\end{enumerate}


\subsubsection{Case $\CL_4(fff)$}
\label{subsec:clfff}

We have not yet seen an identity block declared erasable in Algorithm~\ref{alg:erasable}. For
$\CL_4(fff)$, this occurs while working with unprocessed blocks in the
column $\Int{3, 3}$. Below, we quickly go through
the procedures leading up to this occurrence.


The input block matrix is of the form
\[
	\mattikz{
		\matheader{
      \n \& \n \& \n \& \n \& \n \& \* \& \n \& \* \& \* \& \* \\
      \n \& \n \& \n \& \* \& \* \& \* \& \* \& \* \& \* \& \n \\
      \n \& \* \& \* \& \* \& \* \& \* \& \n \& \* \& \n \& \n \\
      \n \& \n \& \n \& \* \& \* \& \n \& \* \& \n \& \n \& \n \\
      \* \& \* \& \n \& \* \& \n \& \* \& \n \& \n \& \n \& \n \\
      \n \& \* \& \* \& \* \& \* \& \n \& \n \& \n \& \n \& \n \\
      \* \& \* \& \n \& \* \& \n \& \n \& \n \& \n \& \n \& \n \\
      \n \& \* \& \* \& \n \& \n \& \n \& \n \& \n \& \n \& \n \\
      \* \& \* \& \n \& \n \& \n \& \n \& \n \& \n \& \n \& \n \\
      \* \& \n \& \n \& \n \& \n \& \n \& \n \& \n \& \n \& \n \\
		};
		\node[anchor = south east] (p-0-0) at (p-2-1.west |- p-1-1.north west) {};
		\foreach[count = \i] \v in {\(\Int{1, 1}\), \(\Int{1, 2}\),
      \(\Int{1, 3}\), \(\Int{2, 2}\), \(\Int{1, 4}\), \(\Int{2, 3}\),
      \(\Int{2, 4}\), \(\Int{3, 3}\), \(\Int{3, 4}\), \(\Int{4, 4}\)}{
      \node[left=2.5ex] at (p-\i-1) {\scriptsize\v}; }
		\foreach[count = \i] \v in {\(\Int{4, 4}\), \(\Int{3, 4}\),
      \(\Int{3, 3}\), \(\Int{2, 4}\), \(\Int{2, 3}\), \(\Int{1, 4}\),
      \(\Int{2, 2}\), \(\Int{1, 3}\), \(\Int{1, 2}\), \(\Int{1, 1}\)}{
      \node[above=1.5ex] at (p-1-\i) {\scriptsize\v}; }
	}
\]
in general. We have chosen not to display the impermissible
operations here. For the steps similar to ones already done in
previous cases, we only provide the resulting block matrix form after
sequences of operations. Each numbered item below expresses the result
after a sequence of steps involving $v_*$ taken from a particular
column.

\begin{enumerate}[leftmargin=*]
  
\item By procedures on column $\Int{1, 1}$, direct summands with dimension vectors 
  $\dimv{1000}{1000}$ and
  $\dimv{0000}{1000}$
  can be extracted, leaving us with the block matrix form: 
  \[
    \mattikz{
      \matheader{
        \n \& \n \& \n \& \n \& \n \& \* \& \n \& \* \& \* \\
        \n \& \n \& \n \& \* \& \* \& \* \& \* \& \* \& \* \\
        \n \& \* \& \* \& \* \& \* \& \* \& \n \& \* \& \n \\
        \n \& \n \& \n \& \* \& \* \& \n \& \* \& \n \& \n \\
        \* \& \* \& \n \& \* \& \n \& \* \& \n \& \n \& \n \\
        \n \& \* \& \* \& \* \& \* \& \n \& \n \& \n \& \n \\
        \* \& \* \& \n \& \* \& \n \& \n \& \n \& \n \& \n \\
        \n \& \* \& \* \& \n \& \n \& \n \& \n \& \n \& \n \\
        \* \& \* \& \n \& \n \& \n \& \n \& \n \& \n \& \n \\
        \* \& \n \& \n \& \n \& \n \& \n \& \n \& \n \& \n \\
      };
      \node[anchor = south east] (p-0-0) at (p-2-1.west |- p-1-1.north west) {};
      \foreach[count = \i] \v in {\(\Int{1, 1}\), \(\Int{1, 2}\),
        \(\Int{1, 3}\), \(\Int{2, 2}\), \(\Int{1, 4}\), \(\Int{2, 3}\),
        \(\Int{2, 4}\), \(\Int{3, 3}\), \(\Int{3, 4}\), \(\Int{4, 4}\)}{
        \node[left=2.5ex] at (p-\i-1) {\scriptsize\v}; }
      \foreach[count = \i] \v in {\(\Int{4, 4}\), \(\Int{3, 4}\),
        \(\Int{3, 3}\), \(\Int{2, 4}\), \(\Int{2, 3}\), \(\Int{1, 4}\),
        \(\Int{2, 2}\), \(\Int{1, 3}\), \(\Int{1, 2}\)}{
        \node[above=1.5ex] at (p-1-\i) {\scriptsize\v}; }
    }.
  \]

\item Next, procedures on column $\Int{1, 2}$ yield   
  $\dimv{1100}{1100}$,
  $\dimv{1000}{1100}$, and
  $\dimv{0000}{1100}$, 
  giving us the form:
  \[
    \mattikz{
      \matheader{
        \n \& \n \& \n \& \n \& \n \& \* \& \n \& \* \\
        \n \& \n \& \n \& \* \& \* \& \* \& \* \& \* \\
        \n \& \* \& \* \& \* \& \* \& \* \& \n \& \* \\
        \n \& \n \& \n \& \* \& \* \& \n \& \* \& \n \\
        \* \& \* \& \n \& \* \& \n \& \* \& \n \& \n \\
        \n \& \* \& \* \& \* \& \* \& \n \& \n \& \n \\
        \* \& \* \& \n \& \* \& \n \& \n \& \n \& \n \\
        \n \& \* \& \* \& \n \& \n \& \n \& \n \& \n \\
        \* \& \* \& \n \& \n \& \n \& \n \& \n \& \n \\
        \* \& \n \& \n \& \n \& \n \& \n \& \n \& \n \\
      };
      \node[anchor = south east] (p-0-0) at (p-2-1.west |- p-1-1.north west) {};
      \foreach[count = \i] \v in {\(\Int{1, 1}\), \(\Int{1, 2}\),
        \(\Int{1, 3}\), \(\Int{2, 2}\), \(\Int{1, 4}\), \(\Int{2, 3}\),
        \(\Int{2, 4}\), \(\Int{3, 3}\), \(\Int{3, 4}\), \(\Int{4, 4}\)}{
        \node[left=2.5ex] at (p-\i-1) {\scriptsize\v};
      }
      \foreach[count = \i] \v in {\(\Int{4, 4}\), \(\Int{3, 4}\),
        \(\Int{3, 3}\), \(\Int{2, 4}\), \(\Int{2, 3}\), \(\Int{1, 4}\),
        \(\Int{2, 2}\), \(\Int{1, 3}\)}{
        \node[above=1.5ex] at (p-1-\i) {\scriptsize\v};
      }
    }.
  \]

\item Procedures on column $\Int{1, 3}$ extract
  $\dimv{1110}{1110}$,
  $\dimv{1000}{1110}$, and
  $\dimv{0000}{1110}$. The block
  matrix is now  
  \[
    \mattikz{
      \matheader{
        \n \& \n \& \n \& \n \& \n \& \* \& \n \& \0 \\
        \n \& \n \& \n \& \0 \& \0 \& \0 \& \* \& \E \\
        \n \& \n \& \n \& \* \& \* \& \* \& \* \& \0 \\
        \n \& \* \& \* \& \* \& \* \& \* \& \n \& \0 \\
        \n \& \n \& \n \& \* \& \* \& \n \& \* \& \n \\
        \* \& \* \& \n \& \* \& \n \& \* \& \n \& \n \\
        \n \& \* \& \* \& \* \& \* \& \n \& \n \& \n \\
        \* \& \* \& \n \& \* \& \n \& \n \& \n \& \n \\
        \n \& \* \& \* \& \n \& \n \& \n \& \n \& \n \\
        \* \& \* \& \n \& \n \& \n \& \n \& \n \& \n \\
        \* \& \n \& \n \& \n \& \n \& \n \& \n \& \n \\
      };
      \node[anchor = south east] (p-0-0) at (p-2-1.west |- p-1-1.north west) {};
      \foreach[count = \i] \v in {\(\Int{1, 1}\), \(\Int{1, 2}_1\),
        \(\Int{1, 2}_2\), \(\Int{1, 3}\), \(\Int{2, 2}\), \(\Int{1, 4}\),
        \(\Int{2, 3}\), \(\Int{2, 4}\), \(\Int{3, 3}\), \(\Int{3, 4}\), \(\Int{4, 4}\)}{
        \node[left=2.5ex] at (p-\i-1) {\scriptsize\v};
      }
      \foreach[count = \i] \v in {\(\Int{4, 4}\), \(\Int{3, 4}\),
        \(\Int{3, 3}\), \(\Int{2, 4}\), \(\Int{2, 3}\), \(\Int{1, 4}\),
        \(\Int{2, 2}\), \(\Int{1, 3}\)}{
        \node[above=1.5ex] at (p-1-\i) {\scriptsize\v};
      }
    }
  \]
  where we note that column operations from $\Int{1,3}$ to $\Int{2,2}$
  are impermissible.

\item Procedures on column $\Int{2, 2}$ yield $\dimv{0100}{0100}$,
  $\dimv{1100}{1110}$, $\dimv{1100}{1210}$, and $\dimv{0000}{0100}$.
  Similar to what we have seen before, $\dimv{1100}{1210}$ arises from
  the direct summand
  $
    \mattikz{
      \matheader{
        E \& E\\ 
      };
      \foreach[count = \i] \v in {\(\Int{1, 2}_1\)}{
        \node[left=2.5ex] at (p-\i-1) {\scriptsize\v};
      }
      \foreach[count = \i] \v in {\(\Int{2, 2}\), \(\Int{1, 3}\)}{
        \node[above=1.5ex] at (p-1-\i) {\scriptsize\v};
      }
      \node[right=6pt] (LR) at (p-1-2.east |- p-1-2.south east) {};
      \colarrow{ (p-1-2.south |- LR) }{ (p-1-1.south |- LR)};
    }
  $
  that can be obtained after the prescribed operations.
  We then obtain the matrix form:
  \begin{equation*}
    \mattikz{
      \matheader{
        \n \& \n \& \n \& \n \& \n \& \* \& \n \\
        \n \& \n \& \n \& \0 \& \0 \& \* \& \E \\
        \n \& \n \& \n \& \* \& \* \& \* \& \0 \\
        \n \& \* \& \* \& \* \& \* \& \* \& \n \\
        \n \& \n \& \n \& \* \& \* \& \n \& \0 \\
        \* \& \* \& \n \& \* \& \n \& \* \& \n \\
        \n \& \* \& \* \& \* \& \* \& \n \& \n \\
        \* \& \* \& \n \& \* \& \n \& \n \& \n \\
        \n \& \* \& \* \& \n \& \n \& \n \& \n \\
        \* \& \* \& \n \& \n \& \n \& \n \& \n \\
        \* \& \n \& \n \& \n \& \n \& \n \& \n \\
      };
      \node[anchor = south east] (p-0-0) at (p-2-1.west |- p-1-1.north west) {};
      \foreach[count = \i] \v in {\(\Int{1, 1}\), \(\Int{1, 2}_1\), \(\Int{1, 2}_2\), \(\Int{1, 3}\), \(\Int{2, 2}\), \(\Int{1, 4}\), \(\Int{2, 3}\), \(\Int{2, 4}\), \(\Int{3, 3}\), \(\Int{3, 4}\), \(\Int{4, 4}\)}{
        \node[left=2.5ex] at (p-\i-1) {\scriptsize\v};
      }
      \foreach[count = \i] \v in {\(\Int{4, 4}\), \(\Int{3, 4}\), \(\Int{3, 3}\), \(\Int{2, 4}\), \(\Int{2, 3}\), \(\Int{1, 4}\), \(\Int{2, 2}\)}{
        \node[above=1.5ex] at (p-1-\i) {\scriptsize\v};
      }
    }.
  \end{equation*}

\item Working on column $\Int{1, 4}$ next, we extract
  $\dimv{1111}{1111}$, 
  $\dimv{1100}{1211}$, 
  $\dimv{1100}{0100}$, 
  $\dimv{1000}{1111}$, 
  $\dimv{0000}{1111}$, and 
  $\dimv{1000}{0000}$.
  The matrix is now of the form
  \begin{equation*}
    \mattikz{
      \matheader{
        \n \& \n \& \n \& \0 \& \* \& \0 \& \E \\
        \n \& \n \& \n \& \* \& \* \& \0 \& \0 \\
        \n \& \0 \& \* \& \0 \& \* \& \E \& \0 \\
        \n \& \* \& \* \& \* \& \* \& \0 \& \0 \\
        \n \& \n \& \n \& \* \& \* \& \n \& \n \\
        \* \& \* \& \n \& \* \& \n \& \0 \& \0 \\
        \n \& \* \& \* \& \* \& \* \& \n \& \n \\
        \* \& \* \& \n \& \* \& \n \& \n \& \n \\
        \n \& \* \& \* \& \n \& \n \& \n \& \n \\
        \* \& \* \& \n \& \n \& \n \& \n \& \n \\
        \* \& \n \& \n \& \n \& \n \& \n \& \n \\
      };
      \node[anchor = south east] (p-0-0) at (p-2-1.west |- p-1-1.north west) {};
      \foreach[count = \i] \v in {\(\Int{1, 2}_1\), \(\Int{1, 2}_2\), \(\Int{1, 3}_1\), \(\Int{1, 3}_2\), \(\Int{2, 2}\), \(\Int{1, 4}\), \(\Int{2, 3}\), \(\Int{2, 4}\), \(\Int{3, 3}\), \(\Int{3, 4}\), \(\Int{4, 4}\)}{
        \node[left=2.5ex] at (p-\i-1) {\scriptsize\v};
      }
      \foreach[count = \i] \v in {\(\Int{4, 4}\), \(\Int{3, 4}\), \(\Int{3, 3}\), \(\Int{2, 4}\), \(\Int{2, 3}\), \(\Int{1, 4}_1\), \(\Int{1, 4}_2\)}{
        \node[above=1.5ex] at (p-1-\i) {\scriptsize\v};
      }
    }.
  \end{equation*}

\item From column $\Int{2, 3}$, 
  $\dimv{1110}{0110}$, 
  $\dimv{0110}{0110}$,
  $\dimv{1100}{0110}$, 
  $\dimv{1100}{1221}$, 
  $\dimv{1100}{1111}$, 
  $\dimv{0000}{0110}$, 
  $\dimv{1110}{1221}$, 
  $\dimv{2210}{1221}$, 
  $\dimv{1210}{1221}$, and
  $\dimv{0100}{0110}$ are extracted.
  The summands with dimension vectors $\dimv{2210}{1221}$ and $\dimv{1210}{1221}$ involve three identity blocks. In particular, $\dimv{2210}{1221}$ corresponds to a direct summand
  $
    \mattikz{
      \matheader{
        \E \& \0 \\
        \E \& \E \\
      };
      \foreach[count = \i] \v in { \(\Int{1, 2}\), \(\Int{1, 3}\)} {
        \node[left=2.5ex] at (p-\i-1) {\scriptsize\v};
      }
      \foreach[count = \i] \v in {\(\Int{2, 3}\), \(\Int{1, 4}\)}{
        \node[above=1.5ex] at (p-1-\i) {\scriptsize\v};
      }
    }
  $.
  This is isomorphic to copies of the persistence module
  \[
    \begin{tikzcd}[ampersand replacement=\&]
      K^2 \rar{\Id} \&
      K^2 \rar{\mat{\begin{smallmatrix}0&1\end{smallmatrix}}} \&
      K \rar \&
      0
      \\
      K \rar{\mat{\begin{smallmatrix}0\\1\end{smallmatrix}}}
      \uar{\mat{\begin{smallmatrix}0\\1\end{smallmatrix}}} \&
      K^2 \rar{\Id}
      \uar[swap]{\mat{\begin{smallmatrix}1&0\\1&1\end{smallmatrix}}}
      \&
      K^2 \rar{\mat{\begin{smallmatrix}0&1\end{smallmatrix}}}
      \uar[swap]{\mat{\begin{smallmatrix}1&1\end{smallmatrix}}} \&
      K \uar
	\end{tikzcd}
  \]
  by a choice of basis. Similarly, $\dimv{1210}{1221}$ comes from a direct summand
  $
    \mattikz{
      \matheader{
        \E \& \E \\
        \E \& \n \\
      };
      \foreach[count = \i] \v in { \(\Int{1, 3}\), \(\Int{2, 2}\)} {
        \node[left=2.5ex] at (p-\i-1) {\scriptsize\v};
      }
      \foreach[count = \i] \v in {\(\Int{2, 3}\), \(\Int{1, 4}\)}{
        \node[above=1.5ex] at (p-1-\i) {\scriptsize\v};
      }
    }
  $
  which is copies of the persistence module
  \[
    \begin{tikzcd}[ampersand replacement=\&]
      K \rar{\mat{\begin{smallmatrix}1\\0\end{smallmatrix}}} \&
      K^2 \rar{\mat{\begin{smallmatrix}1&0\end{smallmatrix}}} \&
      K \rar \&
      0
      \\
      K \rar{\mat{\begin{smallmatrix}0\\1\end{smallmatrix}}}
      \uar{\Id} \&
      K^2 \rar{\Id}
      \uar[swap]{\mat{\begin{smallmatrix}1&1\\1&0\end{smallmatrix}}} \&
      K^2 \rar{\mat{\begin{smallmatrix}0&1\end{smallmatrix}}}
      \uar[swap]{\mat{\begin{smallmatrix}1&1\end{smallmatrix}}} \&
      K \uar
    \end{tikzcd}.
  \]

  We caution the reader that several identity blocks will appear and stay for some iterations before being extracted. The matrix form below is the result of all the operations taken in this step.

  \begin{equation*}
    \mattikz{
      \matheader{
        \n \& \n \& \n \& \* \& \0 \& \0 \\
        \n \& \0 \& \* \& \0 \& \0 \& \E \\
        \n \& \0 \& \0 \& \* \& \E \& \0 \\
        \n \& \* \& \* \& \* \& \0 \& \0 \\
        \n \& \n \& \n \& \0 \& \E \& \n \\
        \n \& \n \& \n \& \* \& \0 \& \n \\
        \* \& \* \& \n \& \* \& \n \& \0 \\
        \n \& \* \& \* \& \* \& \0 \& \n \\
        \* \& \* \& \n \& \* \& \n \& \n \\
        \n \& \* \& \* \& \n \& \n \& \n \\
        \* \& \* \& \n \& \n \& \n \& \n \\
        \* \& \n \& \n \& \n \& \n \& \n \\
      };
      \node[anchor = south east] (p-0-0) at (p-2-1.west |- p-1-1.north west) {};
      \foreach[count = \i] \v in {\(\Int{1, 2}\), \(\Int{1, 3}_1\), \(\Int{1, 3}_2\), \(\Int{1, 3}_3\), \(\Int{2, 2}_1\), \(\Int{2, 2}_2\), \(\Int{1, 4}\), \(\Int{2, 3}\), \(\Int{2, 4}\), \(\Int{3, 3}\), \(\Int{3, 4}\), \(\Int{4, 4}\)}{
        \node[left=2.5ex] at (p-\i-1) {\scriptsize\v};
      }
      \foreach[count = \i] \v in {\(\Int{4, 4}\), \(\Int{3, 4}\), \(\Int{3, 3}\), \(\Int{2, 4}\), \(\Int{2, 3}\), \(\Int{1, 4}\)}{
        \node[above=1.5ex] at (p-1-\i) {\scriptsize\v};
      }
    }
  \end{equation*}

\item Procedures on column $\Int{2, 4}$ provide us with the direct summands with dimension vectors
  $\dimv{0111}{0111}$, 
  $\dimv{1210}{0110}$, 
  $\dimv{1210}{0221}$, 
  $\dimv{1100}{0111}$, 
  $\dimv{1100}{0000}$, 
  $\dimv{0000}{0111}$, 
  $\dimv{0100}{0111}$, 
  $\dimv{1111}{0111}$, 
  $\dimv{1211}{0111}$, and
  $\dimv{0100}{0000}$. We note that $\dimv{1210}{0221}$ comes from a direct summand with three identity blocks. After all the operations, the matrix form is now:
  \begin{equation*}
    \mattikz{
      \matheader{
        \n \& \0 \& \* \& \0 \& \0 \& \0 \& \0 \& \E \\
        \n \& \0 \& \* \& \0 \& \0 \& \0 \& \E \& \0 \\
        \n \& \0 \& \* \& \0 \& \0 \& \E \& \0 \& \0 \\
        \n \& \* \& \* \& \0 \& \0 \& \0 \& \0 \& \0 \\
        \n \& \n \& \n \& \0 \& \0 \& \E \& \0 \& \n \\
        \0 \& \* \& \n \& \E \& \0 \& \0 \& \0 \& \0 \\
        \* \& \* \& \n \& \0 \& \0 \& \0 \& \0 \& \0 \\
        \n \& \0 \& \* \& \E \& \0 \& \0 \& \0 \& \n \\
        \n \& \0 \& \* \& \0 \& \E \& \0 \& \0 \& \n \\
        \n \& \* \& \* \& \0 \& \0 \& \0 \& \0 \& \n \\
        \* \& \* \& \n \& \0 \& \0 \& \0 \& \0 \& \n \\
        \n \& \* \& \* \& \n \& \n \& \n \& \n \& \n \\
        \* \& \* \& \n \& \n \& \n \& \n \& \n \& \n \\
        \* \& \n \& \n \& \n \& \n \& \n \& \n \& \n \\
      };
      \node[anchor = south east] (p-0-0) at (p-2-1.west |- p-1-1.north west) {};
      \foreach[count = \i] \v in {\(\Int{1, 3}_1\), \(\Int{1, 3}_2\), \(\Int{1, 3}_3\), \(\Int{1, 3}_4\), \(\Int{2, 2}\), \(\Int{1, 4}_1\), \(\Int{1, 4}_2\), \(\Int{2, 3}_1\), \(\Int{2, 3}_2\), \(\Int{2, 3}_3\), \(\Int{2, 4}\), \(\Int{3, 3}\), \(\Int{3, 4}\), \(\Int{4, 4}\)}{
        \node[left=2.5ex] at (p-\i-1) {\scriptsize\v};
      }
      \foreach[count = \i] \v in {\(\Int{4, 4}\), \(\Int{3, 4}\), \(\Int{3, 3}\), \(\Int{2, 4}_1\), \(\Int{2, 4}_2\), \(\Int{2, 4}_3\), \(\Int{2, 4}_4\), \(\Int{1, 4}\)}{
        \node[above=1.5ex] at (p-1-\i) {\scriptsize\v};
      }
    }.
  \end{equation*}

\item In the course of procedures on column $\Int{3, 3}$, we detect an erasable identity block.  This occurs  immediately after we transform $(\Int{2, 3}_1, \Int{3, 3})$ to Smith
  normal form. Prior to this some other blocks in column $\Int{3, 3}$ are processed without issue. Below, we show the relevant part of the current status of the matrix before and after taking Smith normal form:
  \begin{equation*}
    \mattikz{
      \matheader{
        \n \& \E \& \0 \\
        v_* \& \E \& \0 \\
        \E \& \0 \& \E \\
      };
      \node[anchor = south east] (p-0-0) at (p-2-1.west |- p-1-1.north west) {};
      \foreach[count = \i] \v in {
        \(\Int{1, 4}_1\), \(\Int{2, 3}_1\),\(\Int{2, 3}_3\) }{
        \node[left=2.5ex] at (p-\i-1) {\scriptsize\v};
      }
      \foreach[count = \i] \v in {\(\Int{3, 3}_1\), \(\Int{2, 4}_1\), \(\Int{2, 4}_3\)}{
        \node[above=1.5ex] at (p-1-\i) {\scriptsize\v};
      }
    }
    \cong
    \mattikz{
      \matheader{
        \n \& \n \& \E \& \0 \& \0 \& \0 \\
        \n \& \n \& \0 \& \E \& \0 \& \0 \\
        \E \& \0 \& \E \& \0 \& \0 \& \0 \\
        \0 \& \0 \& \0 \& \E \& \0 \& \0 \\
        \E \& \0 \& \0 \& \0 \& \E \& \0 \\
        \0 \& \E \& \0 \& \0 \& \0 \& \E \\
      };
      \node[anchor = south east] (p-0-0) at (p-2-1.west |- p-1-1.north west) {};
      \foreach[count = \i] \v in {\(\Int{1, 4}_1\), \(\Int{1, 4}_2\), \(\Int{2, 3}_1\), \(\Int{2, 3}_2\), \(\Int{2, 3}_3\), \(\Int{2, 3}_4\)}{
        \node[left=2.5ex] at (p-\i-1) {\scriptsize\v};
      }
      \foreach[count = \i] \v in {\(\Int{3, 3}_1\), \(\Int{3, 3}_2\), \(\Int{2, 4}_1\), \(\Int{2, 4}_2\), \(\Int{2, 4}_3\), \(\Int{2, 4}_4\)}{
        \node[above=1.5ex] at (p-1-\i) {\scriptsize\v};
      }
    }.
  \end{equation*}
  Note that the other identity submatrices are split after updating the
  block partitioning.

  We see that the identity block $(\Int{2, 3}_3, \Int{3, 3}_1)$ is erasable by additions
  from the rows in $\Int{2, 3}_1$ to the rows in $\Int{2, 3}_3$ and then
  additions from the columns in $\Int{2, 4}_3$ to the columns in
  $\Int{2, 4}_1$ to zero out the side effects.

  Here
  $\dimv{0010}{0010}$, 
  $\dimv{0110}{0010}$, 
  $\dimv{0110}{0111}$, 
  $\dimv{1221}{0121}$, 
  $\dimv{1110}{0010}$, 
  $\dimv{1210}{0111}$, 
  $\dimv{1210}{0121}$, 
  $\dimv{1110}{0111}$, 
  $\dimv{1110}{0121}$, 
  $\dimv{1110}{1111}$, 
  $\dimv{1110}{1121}$, 
  $\dimv{0000}{0010}$, 
  $\left( \dimv{1221}{0121} \right)$,
  $\left( \dimv{0110}{0111} \right)$,
  $\dimv{1220}{0121}$, and
  $\dimv{0110}{0121}$
  are extracted.
  We remark that indecomposables with dimension vectors $\dimv{1221}{0121}$ and $\dimv{0110}{0111}$ are extracted a second time. This is not double counting. Rather, the corresponding multiplicities are simply added together to get the correct multiplicities for these indecomposables.

  \begin{equation*}
    \mattikz{
      \matheader{
        \n \& \* \& \0 \\
        \0 \& \* \& \E \\
        \* \& \* \& \0 \\
        \n \& \0 \& \E \\
        \n \& \* \& \0 \\
        \* \& \* \& \0 \\
        \n \& \* \& \n \\
        \* \& \* \& \n \\
        \* \& \n \& \n \\
      };
      \node[anchor = south east] (p-0-0) at (p-2-1.west |- p-1-1.north west) {};
      \foreach[count = \i] \v in {\(\Int{1, 3}\), \(\Int{1, 4}_1\), \(\Int{1, 4}_2\), \(\Int{2, 3}_1\), \(\Int{2, 3}_2\), \(\Int{2, 4}\), \(\Int{3, 3}\), \(\Int{3, 4}\), \(\Int{4, 4}\)}{
        \node[left=2.5ex] at (p-\i-1) {\scriptsize\v};
      }
      \foreach[count = \i] \v in {\(\Int{4, 4}\), \(\Int{3, 4}\), \(\Int{2, 4}\)}{
        \node[above=1.5ex] at (p-1-\i) {\scriptsize\v};
      }
    }
  \end{equation*}

\item By procedures on column $\Int{3, 4}$, we extract
  $\dimv{0011}{0011}$,
  $\dimv{0111}{0011}$,
  $\dimv{0010}{0000}$,
  $\dimv{0110}{0000}$,
  $\dimv{1111}{0011}$,
  $\dimv{1221}{0111}$,
  $\dimv{1221}{0122}$,
  $\dimv{1110}{0011}$,
  $\dimv{1110}{0000}$,
  $\dimv{0000}{0011}$,
  $\dimv{0110}{0011}$,
  $\dimv{1221}{0011}$,
  $\dimv{0121}{0011}$,
  $\dimv{1121}{0011}$, and 
  $\dimv{0010}{0011}$; and leave the form:
  \begin{equation*}
    \mattikz{
      \matheader{
        \* \\
        \* \\
        \* \\
        \* \\
      };
      \node[anchor = south east] (p-0-0) at (p-2-1.west |- p-1-1.north west) {};
      \foreach[count = \i] \v in {\(\Int{1, 4}\), \(\Int{2, 4}\), \(\Int{3, 4}\), \(\Int{4, 4}\)}{
        \node[left=2.5ex] at (p-\i-1) {\scriptsize\v};
      }
      \foreach[count = \i] \v in {\(\Int{4, 4}\)}{
        \node[above=1.5ex] at (p-1-\i) {\scriptsize\v};
      }
    }.
  \end{equation*}
  
\item Finally, straightforward procedures on column $\Int{4, 4}$ yield
  $\dimv{0001}{0001}$,
  $\dimv{0001}{0000}$,
  $\dimv{0011}{0001}$,
  $\dimv{0011}{0000}$,
  $\dimv{0111}{0001}$,
  $\dimv{0111}{0000}$,
  $\dimv{1111}{0001}$,
  $\dimv{1111}{0000}$, and
  $\dimv{0000}{0001}$.
\end{enumerate}


\section{Discussion}

Let us show an example of how the proof breaks down in the representation infinite case ($n \geq 5$), in particular the case $\CL_5(ffff)$. We only need to consider the subproblem of the block matrix problem spanned by the rows $\Int{1, 4}, \Int{2, 3}$ and columns $\Int{2, 5}, \Int{3, 4}$:
\begin{equation*}
	\mattikz{
		\matheader{
      \* \& \* \\
      \* \& \* \\
		};
		\node[anchor = south east] (p-0-0) at (p-2-1.west |- p-1-1.north west) {};
		\foreach[count = \i] \v in {\(\Int{1, 4}\), \(\Int{2, 3}\)}{
      \node[left=3.5ex] at (p-\i-1) {\scriptsize\v};
		}
		\foreach[count = \i] \v in {\(\Int{3, 4}\), \(\Int{2, 5}\)}{
      \node[above=1.5ex] at (p-1-\i) {\scriptsize\v};
    }
    \node[right=6pt] (LR) at (p-1-2.east |- p-2-2.south east) {};
     \draw[<->, red, postaction={decorate}] (p-2-2.south |- LR) to[out=240,in=300] (p-2-1.south |- LR);
     \draw[<->, red, postaction={decorate}] (LR |- p-2-2.east)  to[out=30,in=330] (LR |- p-1-2.east);
	}.
\end{equation*}
Note that operations between rows
$\Int{1, 4}$ and $\Int{2, 3}$ are impermissible, and likewise so are
operations between columns $\Int{2, 5}$ and $\Int{3, 4}$. After applying the procedure, we eventually obtain
\begin{equation*}
	\begin{tikzpicture}[baseline = (p.center)]
		\matheader{
		A & E \\
		E & E \\
		};
		\node[anchor = south east] (p-0-0) at (p-2-1.west |- p-1-1.north west) {};
		\foreach[count = \i] \v in {\(\Int{1, 4}\), \(\Int{2, 3}\)}{
		\node[left=3.5ex] at (p-\i-1) {\scriptsize\v};
		}
		\foreach[count = \i] \v in {\(\Int{3, 4}\), \(\Int{2, 5}\)}{
		\node[above=1.5ex] at (p-1-\i) {\scriptsize\v};
		}
	\end{tikzpicture}
\end{equation*}
as a subproblem, where $A$ is an
unerasable unprocessed submatrix and the $E$'s are processed identity matrices of
the same size.

The procedure breaks down when we try to reduce the unprocessed block
$A$ to Smith normal form. Starting with row operations on $\Int{1,4}$,
we get side effects on the neighboring block. Fixing those, we get side effects on the block below, and so on:
\begin{equation*}
	\begin{tikzpicture}[baseline = (p.center)]
		\matheader{
		PA & P \\
		E & E \\
		};
		\node[anchor = south east] (p-0-0) at (p-2-1.west |- p-1-1.north west) {};
		\foreach[count = \i] \v in {\(\Int{1, 4}\), \(\Int{2, 3}\)}{
		\node[left=3.5ex] at (p-\i-1) {\scriptsize\v};
		}
		\foreach[count = \i] \v in {\(\Int{3, 4}\), \(\Int{2, 5}\)}{
		\node[above=1.5ex] at (p-1-\i) {\scriptsize\v};
		}
	\end{tikzpicture}
  \cong
	\begin{tikzpicture}[baseline = (p.center)]
		\matheader{
		PA & E \\
		E & P^{-1} \\
		};
		\node[anchor = south east] (p-0-0) at (p-2-1.west |- p-1-1.north west) {};
		\foreach[count = \i] \v in {\(\Int{1, 4}\), \(\Int{2, 3}\)}{
		\node[left=3.5ex] at (p-\i-1) {\scriptsize\v};
		}
		\foreach[count = \i] \v in {\(\Int{3, 4}\), \(\Int{2, 5}\)}{
		\node[above=1.5ex] at (p-1-\i) {\scriptsize\v};
		}
	\end{tikzpicture}
  \cong
  \begin{tikzpicture}[baseline = (p.center)]
		\matheader{
		PA & E \\
		P & E \\
		};
		\node[anchor = south east] (p-0-0) at (p-2-1.west |- p-1-1.north west) {};
		\foreach[count = \i] \v in {\(\Int{1, 4}\), \(\Int{2, 3}\)}{
		\node[left=3.5ex] at (p-\i-1) {\scriptsize\v};
		}
		\foreach[count = \i] \v in {\(\Int{3, 4}\), \(\Int{2, 5}\)}{
		\node[above=1.5ex] at (p-1-\i) {\scriptsize\v};
		}
	\end{tikzpicture}
  \cong
	\begin{tikzpicture}[baseline = (p.center)]
		\matheader{
		PAP^{-1} & E \\
		E & E \\
		};
		\node[anchor = south east] (p-0-0) at (p-2-1.west |- p-1-1.north west) {};
		\foreach[count = \i] \v in {\(\Int{1, 4}\), \(\Int{2, 3}\)}{
		\node[left=3.5ex] at (p-\i-1) {\scriptsize\v};
		}
		\foreach[count = \i] \v in {\(\Int{3, 4}\), \(\Int{2, 5}\)}{
		\node[above=1.5ex] at (p-1-\i) {\scriptsize\v};
		}
	\end{tikzpicture}.
\end{equation*}

Next, we comment on existing methods and the computational aspects of our work.
Here, we take the persistence module as input and measure computational cost with respect to its size. Requiring the explicit computation of the persistence module $H_q(\mathbb{X})$ may be wasteful. Indeed, classical algorithms for persistence work directly on the level of diagrams of simplicial complexes (for example \cite{comput_ph,zigzag_socg} for filtrations or zigzags, respectively). It is of interest to find similar algorithms that compute an indecomposable decomposition of $H_q(\mathbb{X})$ directly, without explicit computation of the persistence module.

The problem of computing an indecomposable decomposition of a module $M$ over a finite dimensional $K$-algebra $A$ has been well-studied \cite{chistov,lux}. For example, assuming that $K$ is a finite field, \cite{chistov} shows that there is a polynomial time algorithm to find an indecomposable decomposition of $M$.
First, one computes the endomorphism algebra $\End_A(M)=\Hom(M,M)$. Then, a complete set of primitive orthogonal idempotents $\pi_1,\hdots,\pi_\ell$ is computed. The submodules $\pi_i(M)$, for $i=1,\hdots,\ell$, provides a decomposition of $M$ into indecomposables.

One straightforward way to compute $\End_A(M)$ is by solving a system of linear equations.
Let $M=(M_a,\varphi_\alpha)_{a\in Q_0,\alpha\in Q_1}$ be a representation of a bound quiver $(Q,P)$, $d_a=\dim M_a$, and $\hat{d}=\max_{a\in Q_0}{d_a}$. We assume that we have fixed bases for $M_a$, and the actions $\varphi_\alpha$ are given in terms of $K$-matrices. For each vertex $a\in Q_0$, we set up a $d_a\times d_a$ $K$-matrix of unknowns $X_a$, and solve the condition given in Eq.~\eqref{eq:repn_hom} that $X_b\varphi_\alpha - \varphi_\alpha X_a = 0$ for each arrow $\alpha: a \rightarrow b$, giving a linear system of $\displaystyle\sum_{a\in Q_0}d_a^2=\mathcal{O}(\hat{d}^2)$ unknowns and $\displaystyle\sum_{(\alpha:a\rightarrow b)\in Q_1}d_ad_b=\mathcal{O}(\hat{d}^2)$ equations. Thus, it seems to require $\mathcal{O}(\hat{d}^6)$  computational time, even before the rest of the computations (finding primitive orthogonal idempotents and computing images).
There are more clever methods of computing the endomorphism rings, for example by reducing the number of equations and unknowns by first computing a generating system for the module \cite{lux2} or by using Gr\"{o}bner basis methods \cite{green}.

Finally, let us provide an estimate of the computational complexity of our procedure. First, we recapitulate the matrix formalism in Subsection~\ref{subsec:matrixformalism}.
Given a persistence module $M$ over a commutative ladder with $n<5$, writing it as an arrow $F(M):V\rightarrow W$ is straightforward. Then, we decompose its upper and lower rows while keeping track of basis changes to get the forms of the matrices of arrow $\Phi$ relative to the new basis, costing $\mathcal{O}(\hat{d}^3)$.

Let us analyze the size of the corresponding block matrix problem of $M$. Recall that the blocks are of size $m_{c,d}' \times m_{a,b}$ for $\intv[a,b] \reltoeq \intv[c,d]$, where the numbers $m_{a,b}$ and $m'_{c,d}$ are determined by the decomposition of $F(M)$ as in Eq.~\eqref{eq:decompose_cl_rows}. It is clear that $m_{a,b}$ and $m'_{c,d}$ are bounded above by $\hat{d}$, for otherwise, there will be a vertex $x$ of the commutative ladder with $d_x > \hat{d}$, a contradiction. We also compute:
\[
  n\hat{d} \geq \dim V = \sum_{1\leq a\leq b \leq n} \dim \intv[a,b]^{m_{a,b}} = \sum_{1\leq a\leq b \leq n} m_{a,b}(b-a+1) \geq \sum_{1\leq a\leq b \leq n} m_{a,b}
\]
from Eq.~\eqref{eq:decompose_cl_v}, and a similar estimate for the sum of the $m'_{c,d}$. Thus, the total size of the block matrix problem is bounded above by $n\hat{d}$.

The above analysis considers the size of the block matrix problem by counting actual rows and columns of $K$-scalars. In contrast, the number of blocks $b$ themselves is not dependent on the dimension $\hat{d}$, but rather depends only on the orientation $\tau$ and what operations have been performed so far.

Next, we look at Algorithm~\ref{algo:main}. Each query for side effects in Algorithm~\ref{algo:sideeffects} costs $\mathcal{O}(b)$. By the proof of Theorem~\ref{main_theorem}, Lines~\ref{line:main_colfix} and \ref{line:main_rowfix} of Algorithm~\ref{algo:main} do not lead to infinite recursion, thus only a finite number of such queries and matrix operations are needed for Algorithm~\ref{algo:colrowfix}. This number depends not on $\hat{d}$ but only on the current arrangement of block statuses of the block matrix problem.

Furthermore, the check for erasbility and construction of a process tree $p$ by \textproc{erasable} in Line~\ref{line:main_erasable} of Algorithm~\ref{algo:main} does not depend on $\hat{d}$. Indeed, the functions \textproc{row\_erasable} and \textproc{col\_erasable} does not perform any matrix operations. Rather, they simply query the arrangement of matrix statuses and permissible operations in the block matrix problem.

The actual operations on the block matrix problem are given by: transforming a block to Smith normal form, fixing one side effect, zeroing out $v_t$ by $v_E$ for one vertex $(v_t, v_E)$ in the process tree $p$. Each of these cost $\mathcal{O}(n\hat{d}^3)$. Thus, the total cost is simply the number of operations $C$ needed, which is not dependent on $\hat{d}$, times $\mathcal{O}(n\hat{d}^3)$, yielding an overall estimate of $\mathcal{O}(Cn\hat{d}^3)$.


\vspace{1em}
\small{
  \noindent\textbf{Acknowledgements: }
  On behalf of all authors, the corresponding author states that there is no conflict of interest.
  This work was partially supported by JST CREST Mathematics 15656429.
  H.T. is supported by JSPS KAKENHI Grant Number JP16J03138.
}


\end{document}